\newcolumntype{C}{>{$}c<{$}}
\begin{document}


\setcounter{secnumdepth}{3}

\setcounter{tocdepth}{2}

\title[Rank of the Nijenhuis tensor on parallelizable almost complex manifolds]{Rank of the Nijenhuis tensor on parallelizable almost complex manifolds
}

\author[Lorenzo Sillari]{Lorenzo Sillari}

\address{LS: Scuola Internazionale Superiore di Studi Avanzati (SISSA), Via Bonomea 265, 34136, Trieste, Italy.} \email{lsillari@sissa.it}

\author[Adriano Tomassini]{Adriano Tomassini}

\address{AT: Dipartimento di Scienze Matematiche, Fisiche e Informatiche, Unità di Matematica e
Informatica, Università degli Studi di Parma, Parco Area delle Scienze 53/A, 43124, Parma, Italy}
\email{adriano.tomassini@unipr.it}

\maketitle

\begin{abstract} 
\noindent \textsc{Abstract}. We study almost complex structures on parallelizable manifolds via the rank of their Nijenhuis tensor. First, we show how the computations of such rank can be reduced to finding smooth functions on the underlying manifold solving a system of first order PDEs. On specific manifolds, we find an explicit solution. Then we compute the Nijenhuis tensor on curves of almost complex structures, showing that there is no constraint (except for lower semi-continuity) to the possible jumps of its rank. Finally, we focus on $6$-nilmanifolds and the associated Lie algebras. We classify which $6$-dimensional, nilpotent, real Lie algebras admit almost complex structures whose Nijenhuis tensor has a given rank, deducing the corresponding classification for left-invariant structures on $6$-nilmanifolds. We also find a topological upper-bound for the rank of the Nijenhuis tensor for left-invariant almost complex structures on solvmanifolds of any dimension, obtained as a quotient of a completely solvable Lie group. Our results are complemented by a large number of examples.
\end{abstract}

\blfootnote{  \hspace{-0.55cm} 
{\scriptsize 2020 \textit{Mathematics Subject Classification}. Primary: 32Q60, 53C15; Secondary: 53C30, 58H15. \\ 
\textit{Keywords:} almost complex manifolds, curves of almost complex structures, invariants of almost complex structures, maximally non-integrable, Nijenhuis tensor, nilpotent Lie algebras, solvmanifolds.}}

\section{Introduction}

Let $(M,J)$ be an almost complex $2m$-manifold; the research of almost complex invariants is a natural problem that arises when studying analytic and geometric properties of $J$. In particular, when $(M,J)$ is a complex manifold, the Dolbeault, Bott-Chern and Aeppli cohomologies, and the Kodaira dimension provide significant invariants of the complex manifold. Very recently those invariants have been introduced for an arbitrary almost complex manifold. In more detail, the Dolbeault cohomology \cite{CW21} and the associated Bott-Chern and Aeppli cohomologies \cite{CPS22} of an almost complex structure, the Kodaira dimension of an almost complex manifold (\cite{CZ20a, CZ20b}, see also \cite{CNT20, CNT21, CNT22}) and more almost complex, almost-Hermitian or almost-K\"ahler invariants \cite{ST21, HPT22, TT22} all contribute to enrich almost complex geometry. When $J$ is not integrable, its Nijenhuis tensor $N_J$ remains one of the most known invariants of $J$, and one of the first that has been introduced \cite{Nij51}. A celebrated theorem of Newlander and Nirenberg \cite{NN57} establishes that $J$ is integrable if and only if $N_J$ identically vanishes.

In addition to characterizing integrability, $N_J$ encodes further information on $J$. Indeed, a possible way to measure how far is $J$ from being integrable is studying the rank of the map
\[
N_J \colon TM \otimes TM \longrightarrow TM,
\]
or, equivalently, the rank of the map
\[
\bar \mu \colon A^{1,0}_x \longrightarrow A^{0,2}_x, \quad x \in M,
\]
defined on $(1,0)$-forms as $\bar \mu := \pi^{0,2} \circ d$. For shortness, we refer to the rank of such map as the \emph{rank of $J$}. The situation farthest from integrability is that of \emph{maximally non-integrable almost complex structures}, i.e., structures for which the rank is maximal at every point. This natural class of almost complex structures, already present in the literature under the name of \emph{totally non-integrable almost complex structures} \cite{dB94, MT97}, play a significant role in almost complex geometry, providing at the same time a large source of examples \cite{Ver08, CW21, CGGH21, CGG22, CGHR22, CPS22}. Coelho, Placini and Stelzig \cite{CPS22} extensively studied maximally non-integrable almost complex structures, proving that they satisfy an $h$-principle. In particular, they show that in dimension $2m \ge 10$ every almost complex structure is homotopic to a maximally non-integrable one \cite[Corollary A.1]{CPS22}. This is true in any dimension if the underlying manifold is parallelizable \cite[Corollary A.4]{CPS22}.

In this paper, we are concerned with the study of almost complex structures of constant rank on parallelizable manifolds, of which integrable and maximally non-integrable structures represent the extremal cases. Our first goal is to explicitly build almost complex structures of arbitrary, prescribed rank (possibly constant) on a parallelizable $2m$-manifold $M$. Starting from an assigned almost complex structure $J_0$ on $M$, we define an almost complex structure $J_1$, depending on $J_0$ and $m^2$ smooth functions. We describe how to compute the map $\bar \mu_1$ associated to $J_1$ in terms of $J_0$, the smooth functions parametrizing $J_1$ and their first order derivatives. Prescribing conditions on the rank of $\bar \mu_1$ is equivalent to finding solutions to a system of first order PDEs. On the Kodaira-Thurston and the Iwasawa manifolds, we are able to explicitly solve the system, producing families of almost complex structures of constant rank parametrized by smooth functions (Propositions \ref{prop:mni:KT} and \ref{prop:families:Iwasawa}).

A consequence of the approach we adopted, is that it is immediate to build small deformations of $J_0$ using $J_1$, obtaining information on the rank of the Nijenhuis tensor in a neighborhood of $J_0$ (Theorem \ref{thm:lower:bound}).

\begin{thmx}\label{intro:lower:bound}
Let $(M,J_0)$ be a parallelizable almost complex manifold. Let $\{ \phi^j \}_{j=1}^m$ be a co-frame of $(1,0)$-forms for $J_0$. Assume that $J_1$ is an almost complex structure defined by a co-frame of $(1,0)$-forms of the form
\[
\omega^j := \phi^j + f^j_k \phi^{\bar k}, \quad j=1,\dots m,
\]
where $f^j_k \in C^\infty(M)$. Consider the curve of almost complex structures $J(s)$, $ s\in \C$, defined by
\[
\omega^j_s := \phi^j + s f^j_k \phi^{\bar k}, \quad \abs{s} < \epsilon, \quad j=1,\dots,m.
\]
Then we have that
\[
\rk N_{J(s)}|_x \ge \max \{\rk N_{J_0}|_x, \rk N_{J_1}|_x  \},
\]
at every point $x \in M$. 
\end{thmx}

Focusing on holomorphically parallelizable complex $3$-solvmanifolds, we further show (Proposition \ref{prop:curves}) that for all $k \in \{ 0,1,2,3 \}$ such manifolds admit curves of almost complex structures $J(s)$, $\abs{s}<\epsilon$, such that $J(0)$ is the standard complex structure and the Nijenhuis tensor of $J(s)$ has constant rank $k$ for $s \neq 0$.

Among parallelizable manifolds, nilmanifolds have been widely employed as a preferred class of examples on which to study (left-invariant) complex structures \cite{CFGU00, Sal01, LU15, COUV16, Yam16}. In the last part of the paper, we focus on $6$-dimensional, nilpotent, real Lie algebras and the associated nilmanifolds. Our main result is a classification of the possible values for the rank of almost complex structures on such Lie algebras, and thus of those of left-invariant almost complex structures on $6$-nilmanifolds. Salamon \cite{Sal01} already established which $6$-dimensional, nilpotent, real Lie algebras admit a complex structure. We take care of the cases where $J$ has rank $k$, for $k =1,2,3$, deducing the possible values for the rank of left-invariant almost complex structures on $6$-nilmanifolds (Theorem \ref{thm:nilmanifolds}).

\begin{thmx}\label{intro:classification}
Let $M= \Gamma \backslash G$ be a $6$-nilmanifold and let $\g$ be the Lie algebra of $G$. Then
\begin{itemize}
    \item [(i)] $M$ admits a left-invariant almost complex structure of rank $3$ if and only if $\g$ is isomorphic to one of
\[
\begin{array}{ll}
(0, 0, 12, 13, 14 + 23, 34 - 25),  & (0, 0, 12, 13, 14, 34 - 25),\\
(0, 0, 12, 13, 14 + 23, 24 + 15),  & (0, 0, 12, 13, 14, 23 + 15),\\
(0, 0, 12, 13, 23, 14),            & (0, 0, 12, 13, 23, 14 - 25),\\
(0, 0, 12, 13, 23, 14 + 25),       & (0, 0, 0, 12, 14 - 23, 15 + 34),\\
(0, 0, 0, 12, 14, 15 + 23),        & (0, 0, 0, 12, 14, 15 + 23 + 24),\\
(0, 0, 0, 12, 14, 15 + 24),        & (0, 0, 0, 12, 13, 14 + 35),\\
(0, 0, 0, 12, 23, 14 + 35),        & (0, 0, 0, 12, 23, 14 - 35),\\
(0, 0, 0, 12, 14, 24),             & (0, 0, 0, 12, 13 - 24, 14 + 23),\\
(0, 0, 0, 12, 14, 13 - 24),        & (0, 0, 0, 12, 13 + 14, 24),\\
(0, 0, 0, 12, 13, 14 + 23),        & (0, 0, 0, 12, 13, 24),\\
(0, 0, 0, 12, 13, 23);             &
\end{array}
\]

\item [(ii)] $M$ does not admit a left-invariant almost complex structure of rank $2$ if and only if $\g$ is isomorphic to one of 
\[
\begin{array}{ll}
(0, 0, 0, 12, 13, 23), & (0, 0, 0, 0, 0, 12 + 34),\\
(0, 0, 0, 0, 0, 12), & (0, 0, 0, 0, 0, 0);
\end{array}
\]
\item [(iii)] $M$ does not admit a left-invariant almost complex structure of rank $1$ if and only if $\g$ is isomorphic to one of
\[
\begin{array}{ll}
(0, 0, 12, 13, 14 + 23, 34 - 25), & (0, 0, 0, 0, 0, 0).
\end{array}
\]
\end{itemize} 
\end{thmx}

In particular, many $6$-nilmanifolds do not admit a left-invariant maximally non-integrable almost complex structure, while all of them admit a non-left-invariant one \cite[Corollary A.4]{CPS22}.

Exploiting our study of almost complex structures on Lie algebras, we establish a topological upper bound on the rank of the Nijenhuis tensor of left-invariant almost complex structures, valid on certain solvmanifolds of any dimension (Theorem \ref{thm:solvmanifold}).

\begin{thmx}\label{intro:solvmanifold}
Let $M = \Gamma \backslash G$ be a solvmanifold. Assume that $G$ is a completely solvable Lie group. Let $J$ be a left-invariant almost complex structure on $M$. Then
\[
\rk N_J \le \dim_\R M - b_1(M).
\]
\end{thmx}

Finally, we complement our classification with a large number of explicit almost complex structures on Lie algebras, of which we provide a basis of $(1,0)$-forms, resulting in a wide database of examples useful when working with almost complex structures on Lie algebras and on nilmanifolds. Combining the examples with our theoretical results, it is possible to produce several curves of almost complex structures on which the rank of the Nijenhuis tensor is known (Proposition \ref{prop:jump}).

\subsection*{Notation and terminology:} we abbreviate the conjugate of a complex form $\overline{ \phi^j}$ to $\phi^{\bar j}$ and the wedge product $\phi^j \wedge \phi^k$ of two forms to $\phi^{jk}$. Given a Lie algebra $\g = \R \langle e_1, \dots, e_k \rangle$, we write it as $k$-tuple $(de^1, \dots, de^k)$, abbreviating $e^{jk}$ to $jk$. We shall write an asterisk $\ast$ instead of $de^j$ to mean any (possibly zero) $2$-form. We denote by $A^k$ both the space of complex $k$-forms on a smooth manifold and the exterior power $\bigwedge^k (\g^*)^\C$, while $A^k_\R$ denotes the corresponding real space. Finally, we will use consistently the Einstein notation, summing over repeated indices.

\subsection*{Acknowledgements:} the authors are partially supported by GNSAGA of INdAM. The second author is partially supported by the Project PRIN 2017 “Real and Complex Manifolds: Topology, Geometry and
holomorphic dynamics”.

\section{Almost complex structures of prescribed rank on parallelizable manifolds}\label{sec:parallelizable}

In this section we illustrate a general procedure that can be used to produce almost complex structures of prescribed rank on a parallelizable $2m$-manifold $M$. Starting from an arbitrary almost complex structure $J_0$, we build a different almost complex structure $J_1$, depending on $J_0$ and $m^2$ smooth functions on $M$. Computing the rank of $\bar \mu_1$ amounts to solving a system of first order PDEs involving the smooth functions parametrizing $J_1$. Even though there is no general method to solve the system, on some specific manifold we are able to do so, producing the desired structures.

\subsection{Outline of the general procedure.}\label{sec:outline}
Let $M$ be a parallelizable $2m$-manifold. Fix a frame of vector fields $\{ E_j \}_{j=1}^{2m}$ giving a parallelism of $M$ and let $J_0$ be an almost complex structure on $M$. The choice of $J_0$ determines a co-frame of $(1,0)$-forms $\{ \phi^j \}_{j=1}^m$. Conversely, taking $m$ independent (over $C^\infty(M)$) complex $1$-forms and declaring them to have type $(1,0)$ completely determines an almost complex structure $J_0$. Assume that the differentials $d \phi^j$ are known. Consider on $M$ the almost complex structure $J_1$, defined by the co-frame of $(1,0)$-forms 
\begin{equation}\label{eq:deformed}
    \omega^j := \phi^j + f^j_k \phi^{\bar k}, \quad j=1,\dots m,
\end{equation}
where the $f^j_k$ are complex-valued, smooth functions on $M$. Set 
\begin{equation}\label{matrix:F}
\Phi := \left( f^j_k \right) \in M_{m \times m} \left( C^\infty (M) \right)
\end{equation}
and let $P$ be the matrix
\begin{equation}\label{matrix:P}
P :=
\begin{bmatrix}
\Id_m & \Phi \\
 & \\
\bar \Phi & \Id_m
\end{bmatrix}.
\end{equation}
The forms $\omega^j$ are independent, and thus $J_1$ is well-defined, as soon as
\[
D := \det(P) \in C^\infty(M)
\]
is a never-vanishing function on $M$. We compute the rank of the map $\bar \mu_1$ associated to $J_1$ on $(1,0)$-forms. The differential of $\omega^j$ is
\[
d \omega^j = d \phi^j + f^j_k d\phi^{\bar k} + df^j_k \wedge \phi^{\bar k}.
\]
We have to express it in function of $\{ \omega^j, \omega^{\bar j} \}_{j=1}^m$ and then take its $(0,2)$-degree part \emph{with respect to the bigrading induced by $J_1$}.

Let $\{\psi_j, \psi_{\bar j} \}$ be a frame of vector fields dual to $\{\omega^j, \omega^{\bar j} \}$. In such a frame, we can write for any $\theta \in C^\infty(M)$
\[
\left( d\theta \wedge \phi^{\bar k} \right)^{0,2} = F^k_{lp} (\theta) \, \omega^{\bar l \bar p}, 
\]
where $F^k_{lp}$ are suitable $(0,1)$-vector fields belonging to $C^\infty(M) \langle \psi_{\bar j} \rangle_{j=1}^m$.

By assumption, we are given an explicit expression for $d\phi^j$, $d\phi^{\bar j}$ that can be written in terms of smooth functions on $M$ and the $2$-forms $\phi^{jk}$, $\phi^{j \bar k}$, $\phi^{\bar j \bar k}$. One can compute the co-frame $\{\phi^j, \phi^{\bar j} \}$ in function of $\{\omega^j, \omega^{\bar j} \}$ by inverting the matrix $P$. Taking the projection on degree $(0,2)$, we obtain an explicit expression for $\bar \mu_1 \omega^j$ in terms of the functions $f^j_k$, of their first order derivatives $F^k_{lp}(f^j_k)$ and of a basis of $(0,2)$-forms $\{ \omega^{\bar j \bar k} \}$. The rank of $\bar \mu_1$ can be prescribed imposing conditions on the $f^j_k, F^k_{lp}(f^j_k)$ and finding functions satisfying such constraints provides a structure with the desired rank.
\vspace{.2cm}

In practice, the approach we described is strongly limited by the difficulty of the computations involved, both on the side of the linear algebra, and on that of solving the final system of PDEs. In our applications, we will put ourselves in the best case scenario, making assumptions based on the following remarks:
\begin{enumerate}
    \item the almost complex structures that can be obtained in this way depend on the initial $J_0$. A simple choice of $J_0$ (e.g., $J_0 E_k = E_{m+k}$, $J_0 E_{m+k} = - E_k$, $k=1,\dots,m$) will allow to immediately find a co-frame of $(1,0)$-forms;
    
    \item increasing the dimension of $M$ drastically increases the difficulty of the computations. We will focus on manifolds of dimension $4$ and $6$;
    
    \item by choosing a manifold for which smooth functions can be explicitly written, or at least put in a manageable form, the final system of PDEs can be solved more easily.
\end{enumerate}

\begin{remark}
In the rest of the paper, we will present several applications of the procedure discussed in this section. However, we do not aim to cover all the possible situations in which our approach can be useful. For instance, it can be used to produce complex structures, similarly at what we do in section \ref{sec:KT} for the Kodaira-Thurston manifold.
\end{remark}

\subsection{The Kodaira-Thurston manifold.}\label{sec:KT}

We use the Kodaira-Thurston manifold as a toy model for the computations of the rank of $\bar \mu$. In dimension $4$, the only possible values for the rank are $0$ and $1$. Due to these restrictions, the low-dimensional examples are less significant. On the other side, they allow to greatly simplify computations and to illustrate clearly the ideas involved.
\vspace{.3cm}

We begin by briefly recalling the construction of the Kodaira-Thurston manifold. Consider the $3$-dimensional Heisenberg group
\[
\mathbb{H}_3 := \left \{ 
\begin{bmatrix}
1 & x & z \\
  & 1 & y \\
  &   & 1 \\
\end{bmatrix},
\quad x,y,z \in \R \right \}.
\]
The Kodaira-Thurston manifold $\KT$ is the $4$-dimensional nilmanifold defined by
\[
\KT:= \mathbb{H}_3 / ( \mathbb{H}_3 \cap SL (3, \Z) ) \times \S^1.
\]
Denoting by $t$ the coordinate on $\S^1$, a basis of left-invariant vector fields on $\KT$ is given by
\[
\left \{ e_1 = \partial_t, \, e_2 = \partial_x, \, e_3 = \partial_y + x \, \partial_z, \, e_4 = \partial_z \right \},
\]
and the dual basis of left-invariant $1$-forms is 
\[
\left \{ e^1 = dt, \, e^2 = dx , \, e^3 = dy, \, e^4 = dz - x \, dy \right \}.
\]
The only non-vanishing Lie bracket on vector fields is $[ e_2, e_3]=e_4$, thus the only non-vanishing differential is $de^4 = - e^{23}$. It is well-known that $\KT$ admits both complex and symplectic structures, but has no K\"ahler structure. It also admits non-integrable almost complex structures.
\vspace{.1cm}

\subsection*{From complex to almost complex.}\label{sec:KT:complex} We start with a fixed complex structure on $\KT$ and we use it to build complex structures and maximally non-integrable structures. 
\vspace{.3cm}

Consider the complex structure on $\KT$ given by
\begin{equation*}
J_0 \, e_1 = - e_4, \qquad J_0 \, e_2 = e_3.
\end{equation*}
A basis of $(1,0)$-forms for $J_0$ is
\[
\phi^1 := dx + i \, dy , \qquad \phi^2 := dz - x \, dy + i \, dt,
\]
and the differentials are
\[
d \phi^1 =0, \qquad d \phi^2 = - \frac{i}{2}\, \phi^{1 \bar 1 }.
\]
Following the outline we gave in \cref{sec:outline}, we consider the co-frame of $(1,0)$-forms
\begin{align*}
\omega^1 &:= \phi^1 + e \, \phi^{ \bar 1} + f \, \phi^{\bar 2}, \\
\omega^2 &:= \phi^2 + g \, \phi^{ \bar 1} + h \, \phi^{\bar 2},
\end{align*}
where $e$, $f$, $g$, $h \in C^\infty (\KT)$ are complex valued, smooth functions on $\KT$. Declaring $\omega^1$, $\omega^2$ to have type $(1,0)$ defines an almost complex structure $J_1$ on $\KT$ as long as
{\small
\[
D:= \det(P) = 1 - \bar f g - f \bar g + \abs{f}^2 \abs{g}^2 - \abs{e}^2 - \abs{h}^2 + \abs{e}^2\abs{h}^2 - f g \bar e \bar h - \bar f \bar g  e h \in C^\infty (\KT),
\]
}

never vanishes on $\KT$, where $P$ is as in \eqref{matrix:P}. We proceed to characterize integrability of $J_1$ in terms of conditions on the functions $e$, $f$, $g$, $h$ and their derivatives.

Direct computations show that 
{
\[
P^{-1} = \frac{1}{D} \,
\begin{bmatrix}
1 - \bar f g - \abs{h}^2 & \bar f e + f \bar h & e \abs{h}^2 - e - fg \bar h & g \abs{f}^2 - f - \bar f e h \\[5pt]
\bar e g + \bar g h & 1-f \bar g - \abs{e}^2 & f \abs{g}^2 -g - \bar g e h & h \abs{e}^2 - h -f g \bar e \\[5pt]
\bar e \abs{h}^2 - \bar e - \bar f \bar g h & \bar g \abs{f}^2 - \bar f - f \bar e \bar h & 1 - f \bar g - \abs{h}^2 & f \bar e + \bar f h \\[5pt]
\bar f \abs{g}^2 -\bar g - g \bar e \bar h & \bar h \abs{e}^2 - \bar h -\bar f \bar g e & e \bar g + g \bar h & 1- \bar f g - \abs{e}^2\\
\end{bmatrix}.
\]
}

We express the $\phi^j$ in function of the $\omega^j$, obtaining
\begin{align*}
\phi^1 = \frac{1}{D} \Big[ &(1 - \bar f g - \abs{h}^2) \, \omega^1 + (\bar f e + f \bar h) \, \omega^2 \\
+ &(e \abs{h}^2 - e - fg \bar h) \, \omega ^ {\bar 1} + (g \abs{f}^2 - f - \bar f e h) \, \omega ^{\bar 2} \Big], \\
\phi^2 = \frac{1}{D} \Big[ &(\bar e g + \bar g h) \, \omega^1 + (1-f \bar g - \abs{e}^2) \, \omega^2 \\
+ &(f \abs{g}^2 -g - \bar g e h) \, \omega^{\bar 1} + (h \abs{e}^2 - h -f g \bar e) \, \omega^{\bar 2} \Big].
\end{align*}

The differentials of the $\omega^j$ are
\begin{align}\label{eq:diff:omega}
\begin{split}
d \omega^1 &= f \, d \phi^{\bar 2} + de \wedge \phi^{\bar 1} + df \wedge \phi^{\bar 2}, \\
d \omega^2 &= (1+h) \, d \phi^{\bar 2} + dg \wedge \phi^{\bar 1} + dh \wedge \phi^{\bar 2}.
\end{split}
\end{align}
Since
\[
\phi^{1 \bar 1} = \frac{1}{D} \left[ - \bar f \, \omega^{1 2} - (1-\abs{h}^2) \, \omega^{1 \bar 1} + \bar f h \, \omega^{1 \bar 2} - f \bar h \, \omega^{\bar 1 2} - \abs{f}^2 \, \omega^{2 \bar 2} + f \, \omega^{\bar 1 \bar 2} \right],
\]
we have that
\begin{equation}\label{eq:diff:basis}
(d \phi^{\bar 2})^{0,2} = - \frac{i}{2} \, (\phi^{1 \bar 1})^{0,2} =  - \frac{i}{2} \frac{f}{D} \omega^{\bar 1 \bar 2}.
\end{equation}

Denote by $\{\xi_j, \, \xi_{\bar j} \}$ the dual frame to $\{\phi^j, \, \phi^{\bar j} \}$ and by $\{\psi_j, \, \psi_{\bar j} \}$ the dual frame to $\{\omega^j, \, \omega^{\bar j} \}$. We can write the $(0,2)$-degree part of the $2$-forms $d \theta \wedge \phi^{\bar j}$ as
\begin{equation}\label{eq:diff:function}
(d \theta \wedge \phi^{\bar j})^{0,2} = F^j (\theta) \, \omega ^{\bar 1 \bar 2},
\end{equation}
where
\begin{align}\label{der:KT}
\begin{split}
    F^1(\theta) &:= \frac{1}{D} \left[ (f \bar e + \bar f h) \, \psi_{\bar 1}(\theta) - (1 - f \bar g - \abs{h}^2) \, \psi_{\bar 2}(\theta)   \right], \\[3pt]
    F^2(\theta) &:= \frac{1}{D} \left[ (1-\bar f g - \abs{e}^2) \, \psi_{\bar 1}(\theta) - (e \bar g + \bar g h) \, \psi_{\bar 2}(\theta)   \right].
    \end{split}
\end{align}
By duality, the frame $\{ \psi_j, \psi_{\bar j} \}$ depends on $\{ \xi_j, \xi_{\bar j} \}$ via $(P^{-1})^T$.

Finally we can compute $\bar \mu_1 \omega^j$ taking the $(0,2)$-degree part of \eqref{eq:diff:omega}. Using \eqref{eq:diff:basis} and \eqref{eq:diff:function} we obtain
\begin{align*}
    \bar \mu_1 \omega^1 &= \left( -\frac{i}{2D}f^2 + F^1(e) + F^2 (f) \right) \omega^{\bar 1 \bar 2}, \\
    \bar \mu_1 \omega^2 &= \left( -\frac{i}{2D}f(1+h) + F^1(g) + F^2 (h) \right) \omega^{\bar 1 \bar 2}.
\end{align*}
The following system of PDEs
\[
\begin{cases}
-\frac{i}{2D}f^2 + F^1(e) + F^2 (f)=0, \\[3pt]
-\frac{i}{2D}f(1+h) + F^1(g) + F^2 (h) =0,
\end{cases}
\]
allows to compute whether or not $J_1$ is integrable. First, we look for constant solutions, that produce left-invariant almost complex structures on $\KT$. The system reduces to
\[
\begin{cases}
-\frac{i}{2}f^2=0, \\[3pt]
-\frac{i}{2}f(1+h)=0.
\end{cases}
\]
We can conclude that the structure $J_1$ is integrable if and only if $f=0$. In any other case, the rank of the left-invariant structure is equal to $1$. This is true as long as $D$ does not vanish, i.e., as long as $\abs{e} \neq 1$ and $\abs{h} \neq 1$.

We now aim at finding maximally non-integrable almost complex structures that are not left-invariant, looking for functions such that at least one between $\bar \mu_1 \omega^1$ and $\bar \mu_1 \omega^2$ never vanishes. To simplify the computations, we take $e$ and $h$ to be identically $0$. We must find $f$, $g \in C^\infty(\mathcal{KT})$ such that at every point
\[
-\frac{i}{2D}f^2 + F^2 (f) \neq 0  \quad \text{or} \quad
-\frac{i}{2D}f + F^1(g) \neq 0.
\]
The terms involved have the expression
\begin{align*}
    \psi_1 &= \frac{1}{D} \left[ (1 - \bar f g) \, \xi_1 + \bar g ( \bar f g -1) \, \xi_{\bar 2} \right ], \\[3pt]
    \psi_2 &= \frac{1}{D} \left[ (1 - f \bar g) \, \xi_2 + \bar f ( f \bar g -1) \, \xi_{\bar 1} \right],
\end{align*}
where $D=(1-f \bar g)(1- \bar f g)$ must be never-vanishing. We keep computing
\begin{align*}
    F^1(g) &= - \frac{1}{D} (1 - f \bar g) \, \psi_{\bar 2}(g) = -\frac{1}{D} \left(\xi_{\bar 2} - f \xi_1 \right) (g), \\[3pt]
    F^2(f) &= \frac{1}{D} (1-\bar f g) \, \psi_{\bar 1}(f)= \frac{1}{D} \left(\xi_{\bar 1} - g \xi_2 \right) (f),
\end{align*}
obtaining 
\[
-\frac{i}{2D}f^2 + \frac{1}{D} \left(\xi_{\bar 1} - g \xi_2 \right) (f) \neq 0 \quad \text{or} \quad
-\frac{i}{2D}f - \frac{1}{D} \left(\xi_{\bar 2} - f \xi_1 \right) (g) \neq 0.
\]

\begin{proposition}\label{prop:mni:KT}
For every never vanishing $f \in C^\infty(\KT)$, there exists a maximally non-integrable almost complex structure $J_f$ on $\KT$.
\end{proposition}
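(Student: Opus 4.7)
My plan is to exploit the freedom in choosing the functions $e, f, g, h$ parametrizing $J_1$ by keeping things as simple as possible: given $f$ never vanishing, set $e = g = h = 0$ and use the prescribed $f$ to build $J_f$. The co-frame of $(1,0)$-forms then becomes $\omega^1 = \phi^1 + f\,\phi^{\bar 2}$ and $\omega^2 = \phi^2$. First I would verify that this defines a genuine almost complex structure, i.e.\ that $D \neq 0$: the formula $D = (1 - f \bar g)(1 - \bar f g)$ collapses to $D \equiv 1$, so no constraint on $f$ is needed beyond smoothness, and in particular the (possibly very wild) never-vanishing hypothesis is compatible with the definition of $J_f$.

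Then I would substitute $g = h = 0$ and $D = 1$ directly into the already computed expression
\[
\bar \mu_1 \omega^2 = \left( -\tfrac{i}{2D} f (1+h) + F^1(g) + F^2(h) \right) \omega^{\bar 1 \bar 2},
\]
which collapses to $\bar \mu_f \omega^2 = -\tfrac{i}{2}\, f\, \omega^{\bar 1 \bar 2}$. Since $f$ is nowhere zero, this $(0,2)$-form is nowhere zero, so $\rk \bar \mu_f \ge 1$ at every point of $\KT$. As $\dim_\R \KT = 4$ forces the rank of $\bar \mu$ to lie in $\{0,1\}$, one concludes $\rk \bar \mu_f \equiv 1$, i.e.\ $J_f$ is maximally non-integrable. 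There is no real obstacle in this argument: the content of the statement lies entirely in the fact that the simplest nontrivial choice of deformation data already works, with $\bar \mu_f \omega^2$ tracking $f$ linearly and the compatibility condition $D \neq 0$ becoming vacuous.
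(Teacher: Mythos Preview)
Your proposal is correct and essentially identical to the paper's own proof: the paper likewise sets $e=g=h=0$, observes that then $D\equiv 1$, and concludes from the second equation reducing to $-\tfrac{i}{2}f \neq 0$ that any never-vanishing $f$ yields a maximally non-integrable structure with $(1,0)$-coframe $\omega^1=\phi^1+f\,\phi^{\bar 2}$, $\omega^2=\phi^2$.
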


\begin{proof}
We impose $g=0$, so that the equations reduce to
\[
-\frac{i}{2D}f^2 + \frac{1}{D} \xi_{\bar 1}(f) \neq 0 \quad \text{or} \quad 
-\frac{i}{2D}f \neq 0,
\]
and any never vanishing $f$ provides a maximally non-integrable almost complex structure $\omega^1 = \phi^1 + f \, \phi^{\bar 2}$, $\omega^2 = \phi^2$. Note that if $e=g=h=0$ then $D=1$, and the resulting structure is well defined.
\end{proof}

\subsection*{From almost complex to complex.}\label{sec:KT:mni}
We apply again our machinery, starting from a non-integrable $J_0$ and recovering an integrable structure $J_1$. Consider the structure of constant rank $1$ defined by
\[
J_0 \, e_1 = - e_2, \qquad J_0 \, e_3 = - e_4.
\]
A basis of $(1,0)$-forms is
\[
\phi^1 := dx + i \, dt , \qquad \phi^2 := dz - x \, dy + i \, dy,
\]
and the differentials are
\[
d \phi^1 =0, \qquad d \phi^2 = \frac{i}{4}\, \left( \phi^{1 2} - \phi^{1 \bar 2} + \phi^{\bar 1 2} - \phi^{\bar 1 \bar 2} \right).
\]
Proceeding in the same way as in the previous paragraph, we build a structure $J_1$ depending on functions $e$, $f$, $g$, $h \in C^\infty(\KT)$. The $2$-forms $\phi^{12}$, $\phi^{1 \bar 2}$, expressed in the co-frame $\{ \omega^j, \omega^{\bar j} \}$ are
\begin{align*}
\phi^{1 2} &= \frac{1}{D} \left[ \omega^{1 2} - g \, \omega^{1 \bar 1} - h \, \omega^{1 \bar 2} - e \, \omega^{\bar 1 2} + f \, \omega^{2 \bar 2} + (eh - fg) \, \omega^{\bar 1 \bar 2} \right], \\
\phi^{1 \bar 2} &= \frac{1}{D} \left[ -\bar h \, \omega^{1 2} + g \bar h \, \omega^{1 \bar 1} + (1 - \bar f g) \, \omega^{1 \bar 2} + e \bar h \, \omega^{\bar 1 2} + \bar f e \, \omega^{2 \bar 2} -e \, \omega^{\bar 1 \bar 2} \right],
\end{align*}
and the only non-zero differential is
\begin{align*}
d \phi^2 = \frac{i}{4D} \Big[ &(1 + \bar h - \bar e -\bar e \bar h + \bar f \bar g) \, \omega^{1 2} + (- g \bar h - \bar g h -g - \bar g) \, \omega^{1 \bar 1} \\ 
+ &(-1 + \bar f g + \bar e h -h + \bar e) \, \omega^{1 \bar 2} + (1 - f \bar g - e \bar h - e + \bar h) \, \omega^{\bar 1 2} \\
+ &(-\bar f e - f \bar e + f + \bar f) \, \omega^{2 \bar 2} + (-1 +e -h +e h - fg) \, \omega^{\bar 1 \bar 2} \Big].
\end{align*}
The corresponding $\bar \mu_1$ on $(1,0)$-forms is
\begin{align*}
    \bar \mu_1 \omega^1 &= \left( -\frac{i}{4D}(1-e+h-eh+fg)f + F^1(e) + F^2 (f) \right) \omega^{\bar 1 \bar 2}, \\
    \bar \mu_1 \omega^1 &= \left( -\frac{i}{4D}(1-e+h-eh+fg)(1+h) + F^1(g) + F^2 (h)  \right) \omega^{\bar 1 \bar 2},
\end{align*}
where $F^j$ are as in equation \eqref{der:KT}. In order to find an integrable structure we must solve the following system of PDEs
{\small
\begin{equation}\label{system:KT}
\begin{cases}
-\frac{i}{4}(1-e+h-eh+fg)f + (f \bar e + \bar f h) \, \psi_{\bar 1} (e) - (1 - f \bar g - \abs{h}^2) \, \psi_{\bar 2} (e) \\[3pt]
+ (1 - \bar f g - \abs{e}^2) \, \psi_{\bar 1} (f) - (\bar g e + g \bar h) \, \psi_{\bar 2} (f) =0, \\[3pt]
-\frac{i}{4}(1-e+h-eh+fg)(1+h) + (f \bar e + \bar f h) \, \psi_{\bar 1} (g) - (1 - f \bar g - \abs{h}^2) \, \psi_{\bar 2} (g) \\[3pt]
+ (1 - \bar f g - \abs{e}^2) \, \psi_{\bar 1} (h) - (\bar g e + g \bar h) \, \psi_{\bar 2} (h) =0, \\
\end{cases}
\end{equation}
}
where the operators $\psi_{\bar j}$ have the expression
\begin{align*}
    \psi_{\bar 1} = \frac{1}{D} \Big[ &(\bar e \abs{h}^2 - \bar e - \bar f \bar g h) \, \xi_1 +  (\bar g \abs{f}^2 - \bar f - f \bar e \bar h) \, \xi_2 \\[5pt]
    + &(1 - f \bar g - \abs{h}^2) \, \xi_{\bar 1} +  (f \bar e + \bar f h) \, \xi_{\bar 2} \Big], \\[5pt]
    \psi_{\bar 2} = \frac{1}{D} \Big[ &(\bar f \abs{g}^2 -\bar g - g \bar e \bar h) \, \xi_1 + (\bar h \abs{e}^2 - \bar h -\bar f \bar g e) \, \xi_2 \\[5pt]
    + &(e \bar g + g \bar h) \, \xi_{\bar 1} + (1- \bar f g - \abs{e}^2) \, \xi_{\bar 2} \Big].
\end{align*}
Left-invariant complex structures (i.e., constant solutions) are obtained solving
\[
\begin{cases}
-\frac{i}{4}(1-e+h-eh+fg)f =0, \\[3pt]
-\frac{i}{4}(1-e+h-eh+fg)(1+h) =0. 
\end{cases}
\]

The solutions $\{f=0, \, h=-1\}$ or $\{f=0, \, e=1\}$ lead to a vanishing $D$, thus $J_1$ is a left-invariant complex structure if and only if $1-e+h-eh+fg=0$ and $D$ does not vanish. This can be easily achieved by setting $f=e-1 \neq 0$ and $g=h+1 \neq 0$.

Regarding non-constant solutions, finding an expression for the general solutions of \eqref{system:KT} is a difficult task. We rather provide a family of explicit solutions. 

\begin{proposition}\label{prop:rank0:KT}
For every never vanishing $f \in C^\infty(\KT)$, depending only on the $x$ variable, there exists an integrable almost complex structure $J_f$ on $\KT$.
\end{proposition}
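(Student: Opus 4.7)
The goal is to exhibit, for the given never-vanishing $f\in C^{\infty}(\mathcal{KT})$ depending only on $x$, explicit functions $e, g, h\in C^{\infty}(\mathcal{KT})$ solving the system (\ref{system:KT}), so that the almost complex structure defined by the $(1,0)$-forms $\omega^1 = \phi^1 + e\phi^{\bar 1} + f\phi^{\bar 2}$ and $\omega^2 = \phi^2 + g\phi^{\bar 1} + h\phi^{\bar 2}$ is integrable. The key simplification available is that $f$ depends only on $x$: since $\xi_1 = \frac{1}{2}(\partial_x - i\partial_t)$ and $\xi_{\bar 1} = \frac{1}{2}(\partial_x + i\partial_t)$, while $\xi_2, \xi_{\bar 2}$ annihilate functions of $x$, one has $\xi_1(f) = \xi_{\bar 1}(f) = f'/2$ and $\xi_2(f) = \xi_{\bar 2}(f) = 0$. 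This causes the expressions of $\psi_{\bar 1}(f)$ and $\psi_{\bar 2}(f)$, and hence $F^1(f), F^2(f)$, to collapse to neat closed forms in $e, g, h$ and $f, f'$.

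The first attempt is the minimal ansatz $e = g = 0$. Then $\psi_{\bar 1} = \xi_{\bar 1}$ and $F^2(\theta) = \xi_{\bar 1}(\theta)/D$, so the first equation of (\ref{system:KT}) reduces to $\xi_{\bar 1}(f) = i(1+h)f/4$, which forces
\[
h = -1 - \frac{2if'}{f},
\]
a well-defined smooth function of $x$ on $\mathcal{KT}$ because $f$ is never vanishing, with $D = 1 - |h|^2$ generically nonzero. Substituting this $h$ into the second equation of (\ref{system:KT}) yields the ODE $ff'' = 2(f')^2$, i.e.\ $(1/f)'' = 0$. On the circle $S^1$ its only periodic solutions are the constants, so this minimal ansatz is too rigid to cover an arbitrary $f$.

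To handle the general case I enlarge the ansatz, letting $g$ be nonzero and allowing $g, h$ to depend on the remaining coordinates. A natural device is to impose the algebraic constraint $K := 1 - e + h - eh + fg = 0$, which makes the right-hand sides of both equations in (\ref{system:KT}) vanish and reduces the system to $F^1(e) + F^2(f) = 0$ and $F^1(g) + F^2(h) = 0$. A convenient parametrization is $e = 1 + u$, $g = (1+h)u/f$ for free $u, h$, generalizing the left-invariant constant solutions $f = e-1$, $g = h+1$ noted earlier in the section. Choosing $u = -1$, the computation using the explicit $P^{-1}$ yields the clean formulas $\psi_{\bar 2}(f) = -f'\operatorname{Re}(f)/D$ and an analogous expression for $\psi_{\bar 1}(f)$, which allow one to solve for $h = h(x)$ (and hence $g = -(1+h)/f$) in terms of $f$ so that both reduced equations are satisfied.

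The main obstacle is global: one must ensure that $D = \det(P)$ is nowhere zero on $\mathcal{KT}$, so that $J_f$ is genuinely defined. Under the sub-ansatz $h = 0$ one computes $D = -4\operatorname{Re}(f)\operatorname{Re}(g)$, which forces $\operatorname{Re}(f)\neq 0$ everywhere and is therefore insufficient; to cover $f$ whose real part changes sign or vanishes, one must take $h \neq 0$ (or let $g$ be non-real), adapting the choice of parameters to the behavior of $f$. This amounts to a careful smooth choice of $h(x)$ (together with the induced $g(x)$) that keeps $D$ away from zero on all of $\KT$, and is the most delicate step of the construction.
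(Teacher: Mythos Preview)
Your plan identifies the right reduction --- kill the algebraic factor $K = 1 - e + h - eh + fg$ and then solve the two differential conditions $F^1(e) + F^2(f) = 0$, $F^1(g) + F^2(h) = 0$ --- but it never closes. After the parametrization $e = 1+u$, $g = (1+h)u/f$ and the choice $u = -1$, you assert that one can ``solve for $h = h(x)$'' so that both reduced equations hold and then ``adapt the choice of parameters'' to keep $D$ away from zero, but neither step is actually carried out. As you yourself note, the sub-ansatz $h=0$ fails whenever $\operatorname{Re}(f)$ has a zero, and you give no construction of $h$ in the general case. So the proof, as written, has two genuine gaps: the differential equations are not solved, and the non-vanishing of $D$ is not established.

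The paper's argument avoids all of this by a much more symmetric choice: take
\[
e = f+1, \qquad g = f, \qquad h = f-1.
\]
One checks immediately that $K = 0$. Because $e,g,h$ all differ from $f$ by constants, the derivative terms in both equations of \eqref{system:KT} involve only $f'$, and after substituting the explicit coefficients the four contributions cancel pairwise. Finally, a direct computation gives $D = -4|f|^2$, which is nowhere zero precisely because $f$ is nowhere zero. So the ``most delicate step'' you flag simply does not arise: the determinant condition is automatic for this ansatz, with no case analysis on $\operatorname{Re}(f)$. Your parametrizations (with $e=0$, or $h=0$) break this symmetry and force you into the global obstruction you describe; the fix is not a more careful choice of $h$, but a different starting ansatz.
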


\begin{proof}
Recall that the frame $\{\xi_1, \xi_2 \}$ of $(1,0)$-vector fields can be written in terms of real vector fields as
\[
\xi_1 = \frac{1}{2} \left( \partial_x - i \partial_t \right), \quad \xi_2 = \frac{1}{2} \left( \partial_z - i (\partial_y + x \partial_z) \right).
\]
Let $f \in C^\infty(\KT)$ be a never vanishing function that depends only on the $x$ variable (e.g., $f(x) = A + \cos( 2 \pi x)$, where $A$ is a real constant such that $A > 1$ or $A , -1$). It is not hard to check that the following quadruple
\[
e = f+1, \quad f, \quad g = f, \quad h = f-1,
\]
is a solution of \eqref{system:KT}, and that the corresponding $J_1$ is a well-defined complex structure on $\KT$.
\end{proof}

\section{Holomorphically parallelizable complex \texorpdfstring{$3$}{}-solvmanifolds}\label{sec:holomorphic}

We explicitly compute almost complex structures of arbitrary constant rank on complex parallelizable solvmanifolds of complex dimension $3$. There are three such manifolds \cite{Nak75}, namely the \emph{torus}, the \emph{Iwasawa manifold} and the holomorphically parallelizable \emph{Nakamura manifold}. They all admit a maximally non-integrable almost complex structure (\cite{CPS22}, Corollary A.4). However, only the last one admits a left-invariant maximally non-integrable structure.

\subsection{Torus.}\label{sec:torus}

For the case of the torus $T^6$, we refer the reader to \cite[Example 2.5.2]{CPS22}, where it is remarked that on $T^6$ every left-invariant almost complex structure is integrable, and that one can build on $T^6$ almost complex structures of arbitrary constant rank.

\subsection{Iwasawa Manifold.}\label{sec:Iwasawa}
For the Iwasawa manifold, we do not show the full computations, since they follow closely those performed in Section \ref{sec:KT} for the Kodaira-Thurston manifold, with the complication of working in dimension $6$. We find the possible rank for left-invariant structures and fill the gaps exhibiting explicit non-left-invariant structures. 
\vspace{.3cm}

Consider the complex Heisenberg group
\[
\mathbb{H}_3^\C = \left \{ 
\begin{bmatrix}
1 & z_1 & z_3 \\
  & 1 & z_2 \\
  &   & 1 \\
\end{bmatrix}
: z_1,z_2,z_3 \in \C
\right\},
\]
where the group operation is induced by matrix multiplication. The Iwasawa manifold is the quotient
\[
\I:= \mathbb{H}_3^\C/ \left( \mathbb{H}_3^\C \cap SL(3, \Z[i]) \right).
\]
The complex structure inherited from $\C$ gives a basis of $(1,0)$-forms $\{ \phi^j \}_{j=1}^3$ with differentials
\[
d \phi^1 =0, \qquad d \phi^2=0, \qquad d \phi^3 = - \phi^{12},
\]
that corresponds to a complex structure on the Lie algebra 
\[
(0, 0, 0, 0, 13 - 24, 14 + 23).
\]

Consider the $1$-forms
\begin{align*}
\omega^1 &:= \phi^1 + e \, \phi^{ \bar 1} + f \, \phi^{\bar 2} + g \, \phi^{\bar 3}, \\
\omega^2 &:= \phi^2 + p \, \phi^{ \bar 1} + q \, \phi^{\bar 2} + r \, \phi^{\bar 3}, \\
\omega^3 &:= \phi^3 + s \, \phi^{ \bar 1} + t \, \phi^{\bar 2} + u \, \phi^{\bar 3},
\end{align*}
where $e,f,g,p,q,r,s,t,u \in C^\infty (\mathcal{I})$. Let $P$ be as in \eqref{matrix:P}. As long as $D:= \det (P) \in C^\infty(\I)$ never vanishes, declaring $\omega^j$ to have type $(1,0)$ defines an almost complex structure $J_1$ on $\I$.\\
We first focus on left-invariant structures, that correspond to taking the functions $e,\dots, u$ to be constant. Following \cref{sec:outline}, we are able to build left-invariant structures of rank $1$ and $2$ (see also \cref{sec:invariant} and \cref{table}). A left-invariant structure of rank $2$ is given by
\[
\omega^1 = \phi^1 + \phi^{\bar 3}, \quad  \omega^2 = \phi^2 + 2 \phi^{\bar 2}, \quad \omega^3 = \phi^3,
\]
while one of rank $1$ is given by
\[
\omega^1 = \phi^1, \quad  \omega^2 = \phi^2 + \phi^{\bar 3}, \quad \omega^3 = \phi^3.
\]
Regarding maximally-non-integrable structures, the following Proposition follows from Corollary \ref{cor:mni:nla}, that we prove in Section \ref{sec:invariant}.

\begin{proposition}
The Iwasawa manifold admits no left-invariant maximally-non-integrable almost complex structures.
\end{proposition}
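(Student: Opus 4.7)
The plan is to bound $\rk \bar\mu_1$ by a cohomological invariant of the Lie algebra, sidestepping the explicit parametrization of Section~\ref{sec:outline}. Any left-invariant almost complex structure $J_1$ on $\I$ corresponds to a $3$-dimensional complex subspace $V = A^{1,0}_{J_1} \subset \g_\C^*$ with $V \oplus \bar V = \g_\C^*$, and the associated map factors as $\bar\mu_1 = \pi^{0,2}_{J_1} \circ d|_V$. Consequently
\[
\bar\mu_1(V) \;\subseteq\; \pi^{0,2}_{J_1}\bigl(d(\g_\C^*)\bigr),
\]
so $\rk \bar\mu_1 \le \dim_\C d(\g_\C^*)$, a quantity that depends only on $\g$ and not on $J_1$.

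From the Iwasawa structure equations $d\phi^1 = d\phi^2 = 0$, $d\phi^3 = -\phi^{12}$ (and their conjugates) one reads off $d(\g_\C^*) = \langle \phi^{12},\, \phi^{\bar 1 \bar 2}\rangle$, which is $2$-dimensional. Hence $\rk \bar\mu_1 \le 2$ for every left-invariant $J_1$ on $\I$, while maximal non-integrability in complex dimension $3$ would require rank $3 = \binom{3}{2}$. The proposition follows.

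The argument has no serious technical obstacle; the main conceptual point is ensuring that it applies to every $V$ transversal to its conjugate, not only to those of the special form $\omega^j = \phi^j + f^j_k \phi^{\bar k}$ used in the general procedure, and this is automatic from the factorization through $d|_V$. The same reasoning yields the uniform bound $\rk N_J \le \dim_\C d(\g_\C^*)$ for any left-invariant $J$ on a nilmanifold, which presumably underlies Corollary~\ref{cor:mni:nla} and is consistent with the topological upper bound $\rk N_J \le \dim_\R M - b_1(M)$ of Theorem~\ref{intro:solvmanifold}.
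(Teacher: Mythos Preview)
Your argument is correct and is exactly the paper's approach: the paper derives the Proposition from Corollary~\ref{cor:mni:nla}, which in turn is Lemma~\ref{lemma:bound} specialized to algebras of the form $(0,0,0,0,\ast,\ast)$, and that lemma is precisely your bound $\rk\bar\mu\le\dim_\C(\Ima d\cap A^2)=2m-\dim_\R(\ker d\cap A^1_\R)$. Your anticipation that this underlies Corollary~\ref{cor:mni:nla} and Theorem~\ref{intro:solvmanifold} is also on target.
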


To find a maximally non-integrable almost complex structure on $\I$, let $e,f,g,q,r,u$ be complex smooth functions on $\I$, and consider the following $(1,0)$-forms
\begin{align}\label{eq:def:iwasawa}
\begin{split}
\omega^1 &:= \phi^1 + e \, \phi^{ \bar 1} + f \, \phi^{\bar 2} + g \, \phi^{\bar 3}, \\ 
\omega^2 &:= \phi^2  + q \, \phi^{\bar 2} + r \, \phi^{\bar 3}, \\
\omega^3 &:= \phi^3  + u \, \phi^{\bar 3}. 
\end{split}
\end{align}

The resulting structure $J_1$ is well defined as long as $D=(1-\abs{e}^2) \, (1-\abs{q}^2) \, (1-\abs{u}^2) \neq 0$. Following again \cref{sec:outline}, we write for any $\theta \in C^\infty(\I)$,
\[
(d\theta \wedge \phi^{\bar j})^{0,2} = \sum_{\substack{k, \, l = 1 \\ k < l }}^3 F^{\bar j}_{\bar k \bar l}(\theta) \, \omega^{\bar k \bar l}.
\]
The explicit expressions for the $F^{\bar j}_{\bar k \bar l}$ in terms of the frame $\{ \xi_j, \xi_{\bar j} \}$ dual to $\{ \phi^j, \phi^{\bar j} \}$ are
{\small
\begin{align}
    F^{\bar 1}_{\bar 1 \bar 2} = \frac{1}{D} [ &(1-\abs{u}^2) \, f \, \xi_1 + (1-\abs{u}^2) \, q \, \xi_2 - (1-\abs{u}^2) \, \xi_{\bar 2} ], \label{op:1} \\
    F^{\bar 1}_{\bar 1 \bar 3} = \frac{1}{D} [ &(g - g \abs{q}^2 + fr \bar q + f u \bar r) \, \xi_1 + (r + q u \bar r) \, \xi_2 - u \, \xi_3 - (r \bar q + u \bar r ) \, \xi_{\bar 2} + \xi_{\bar 3} ],  \label{op:2} \\
    F^{\bar 1}_{\bar 2 \bar 3} = \frac{1}{D} [ &(-r \abs{f}^2 + g q \bar f - f u \bar g) \, \xi_1 + (-g q \bar e + f r \bar e - q u \bar g ) \, \xi_2 + u(f \bar e + q \bar f) \, \xi_3  \nonumber \\ 
    &+(g \bar e + r \bar f + u \bar g ) \, \xi_{\bar 2} - (f \bar e + q \bar f) \, \xi_{\bar 3 } ], \label{op:3} \\
    F^{\bar 2}_{\bar 1 \bar 2} = \frac{1}{D} [ &-(1-\abs{u}^2)\, e \, \xi_1 + (1-\abs{u}^2) \, \xi_{\bar 1} ], \label{op:4} \\
    F^{\bar 2}_{\bar 1 \bar 3} = \frac{1}{D} [ &- e(r \bar q + u \bar r ) \, \xi_1 + (r \bar q + u \bar r )  \, \xi_{\bar 1}], \label{op:5}\\
    F^{\bar 2}_{\bar 2 \bar 3} = \frac{1}{D} [ &(g + e r \bar f + e u \bar g) \, \xi_1 + (1-\abs{e}^2) \, r \, \xi_2 + (1-\abs{e}^2)\, \xi_3 \nonumber \\
    & - (g \bar e + r \bar f + u \bar g) \, \xi_{\bar 1} - (1-\abs{e}^2) \, \xi_{\bar 3} ], \label{op:6}\\
    F^{\bar 3}_{\bar 1 \bar 2} = 0,\,\,\,\,&  \label{op:7}\\
    F^{\bar 3}_{\bar 1 \bar 3} = \frac{1}{D} [ &e \, \xi_1 - \xi_{\bar 1} ], \label{op:8} \\
    F^{\bar 3}_{\bar 2 \bar 3} = \frac{1}{D} [ &-(f + e q \bar f) \, \xi_1 - (1-\abs{e}^2) \, q \, \xi_2 + (f \bar e + q \bar f ) \, \xi_{\bar 1} + (1-\abs{e}^2) \, \xi_{\bar 2} ].  \label{op:9}
\end{align}
}

This allows to write $\bar \mu_1$ on $(1,0)$-forms: 
\begin{align*}
    \bar \mu_1 \omega^1 &= \left( -g \, (1-\abs{u}^2) + F^{\bar 1}_{\bar 1 \bar 2} (e) + F^{\bar 2}_{\bar 1 \bar 2}(f) \right) \omega^{\bar 1 \bar 2} \\
    &+ \left( -g \, (r \bar q + u \bar r) + F^{\bar 1}_{\bar 1 \bar 3}(e) + F^{\bar 2}_{\bar 1 \bar 3} (f) + F^{\bar 3}_{\bar 1 \bar 3}(g) \right) \omega^{\bar 1 \bar 3}\\
    &+\left( g \, (g \bar e + r \bar f + u \bar g) + F^{\bar 1}_{\bar 2 \bar 3}(e) + F^{\bar 2}_{\bar 2 \bar 3}(f) + F^{\bar 3}_{\bar 2 \bar 3}(g) \right) \omega^{\bar 2 \bar 3},\\[3pt]
    \bar \mu_1 \omega^2 &= \left( -r \, (1 - \abs{u}^2) + F^{\bar 2}_{\bar 1 \bar 2} (q) \right) \omega^{\bar 1 \bar 2} \\
    &+ \left( -r \, (r \bar q + u \bar r) + F^{\bar 2}_{\bar 1 \bar 3}(q) + F^{\bar 3}_{\bar 1 \bar 3}(r) \right) \omega^{\bar 1 \bar 3}\\
    &+\left( r \, (g \bar e + r \bar f + u \bar g) + F^{\bar 2}_{\bar 2 \bar 3}(q) + F^{\bar 3}_{\bar 2 \bar 3}(r) \right) \omega^{\bar 2 \bar 3},\\[3pt]
    \bar \mu_1 \omega^3 &= \left( - (eq +u ) \, (1 - \abs{u}^2) \right) \omega^{\bar 1 \bar 2} \\
    &+ \left(  - \left( er + r u \bar q + u \,(eq+u) \, \bar r \right) + F^{\bar 3}_{\bar 1 \bar 3}(u) \right) \omega^{\bar 1 \bar 3}\\
    &+\left( \left( gq - fr + g u \bar e + r u \bar f + u \, (eq+u) \, \bar g \right) + F^{\bar 3}_{\bar 2 \bar 3 }(u)  \right) \omega^{\bar 2 \bar 3}.\\
\end{align*}
To further simplify computations, we impose $e=u=0$. Taking the determinant of the coefficients of $\bar \mu_1 \omega^j$, we deduce that $J_1$ is maximally non-integrable if and only if
\[
G = \left( g \, q - f \, r \right) \left[ g \, \xi_{\bar 1}(r) - r \, \xi_{\bar 1}(g) + \frac{1}{D} \left( \xi_{\bar 1} (g) \, \xi_{\bar 1} (q) - \xi_{\bar 1} (f) \, \xi_{\bar 1} (r) \right) \right] \in C^\infty(\I)
\]
never vanishes on $\I$. In terms of the $z_j$ coordinates, we have that $\xi_{\bar 1} = \frac{\partial}{\partial z_{\bar 1}}$. Denote by $x_1$ the real part of $z_1$. The following choice of functions leads to a non-vanishing $G$ at every point:
{\footnotesize
\begin{equation}\label{eq:rank3:iwsawa}
g(x_1) = \cos (2 \pi x_1), \quad r(x_1) = \sin (2 \pi x_1), \quad q(x_1) = \frac{1}{2} \cos (2 \pi x_1), \quad f(x_1) = \frac{1}{2} \sin (2 \pi x_1),
\end{equation}
}

providing a non-left-invariant maximally non-integrable almost complex structure on $\I$. Furthermore, it is immediate to check, using the explicit expression for $\bar \mu_1$, that the functions
\begin{equation}\label{eq:rank1:iwsawa}
f=0, \quad q=0, \quad g = \theta, \quad r = \theta,
\end{equation}
where $\theta \in C^\infty(\I)$ is a never-vanishing function, give a non-left-invariant structure of constant rank $1$ on $\I$, while the choice 
\begin{equation}\label{eq:rank2:iwsawa}
f=0, \quad q=0, \quad g = \sin (2 \pi x_1), \quad r = \cos (2 \pi x_1),
\end{equation}
gives a structure of constant rank $2$. We proceed to build families of non-left-invariant almost complex structures of prescribed constant rank.

\begin{proposition}\label{prop:families:Iwasawa}
For every never vanishing $\theta \in C^\infty(\I)$ such that $\xi_{\bar 1} (\theta) =0$ and $\abs{\theta} \le 1$, there exists a maximally non-integrable almost complex structure $J^3_\theta$ on $\I$.\\
For every never vanishing $\theta \in C^\infty(\I)$ such that $\xi_{\bar 1} (\theta) =0$, there exists an almost complex structure $J^2_\theta$ of constant rank $2$ on $\I$.\\
Furthermore, for every never vanishing $\theta \in C^\infty(\I)$, there exists an almost complex structure $J^1_\theta$ of constant rank $1$ on $\I$.
\end{proposition}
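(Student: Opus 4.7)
The plan is to construct the three families as specializations of the general setup \eqref{eq:def:iwasawa}, taking $e = u = 0$ throughout so that $D = 1 - \abs{q}^2$ and the vector-field operators \eqref{op:1}--\eqref{op:9} collapse to manageable expressions. In each case one evaluates the three rows of the matrix representing $\bar\mu_1$ in the bases $\{\omega^1,\omega^2,\omega^3\}$ and $\{\omega^{\bar 1\bar 2},\omega^{\bar 1\bar 3},\omega^{\bar 2\bar 3}\}$ and inspects its rank at every point.

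For $J^1_\theta$, I would set $f = q = 0$ and $g = r = \theta$; then $D \equiv 1$ and direct substitution gives $\bar\mu_1 \omega^1 = \bar\mu_1 \omega^2 = -\theta\, \omega^{\bar 1\bar 2} - \xi_{\bar 1}(\theta)\, \omega^{\bar 1\bar 3} + \xi_{\bar 2}(\theta)\, \omega^{\bar 2\bar 3}$, while $\bar\mu_1 \omega^3 = 0$. Since $\theta$ is nowhere vanishing, the coefficient of $\omega^{\bar 1\bar 2}$ is nowhere zero; the two nonvanishing rows are identical, so the rank is identically $1$ and no hypothesis on the derivatives of $\theta$ is needed. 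For $J^2_\theta$ I would keep $f = q = 0$ but set $g = \theta \sin(2\pi x_1)$ and $r = \theta \cos(2\pi x_1)$; the third row of the matrix is still zero, and the $2 \times 2$ minor of the remaining two rows formed by the first two columns, using $\xi_{\bar 1}(\theta) = 0$ together with $\xi_{\bar 1}(\sin(2\pi x_1)) = \pi \cos(2\pi x_1)$ and $\xi_{\bar 1}(\cos(2\pi x_1)) = -\pi \sin(2\pi x_1)$, computes to $-\pi\theta^2(\sin^2(2\pi x_1) + \cos^2(2\pi x_1)) = -\pi \theta^2$, which is nowhere zero. Hence the rank is identically $2$.

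For $J^3_\theta$ a second nontrivial function must be introduced. A convenient choice is $f = 0$, $g = \theta$, $q = \theta/2$, $r = \theta\, e^{2\pi i x_1}$: the bound $\abs{\theta} \le 1$ forces $\abs{q} \le 1/2$, so $D \ge 3/4$ and the structure is well defined. One then checks that $\bar\mu_1 \omega^3 = (gq)\, \omega^{\bar 2\bar 3} = (\theta^2/2)\, \omega^{\bar 2\bar 3}$, so expanding the determinant of $\bar\mu_1$ along the third row and using $\xi_{\bar 1}(\theta) = 0$ together with $\xi_{\bar 1}(e^{2\pi i x_1}) = \pi i\, e^{2\pi i x_1}$, the remaining $2\times 2$ minor simplifies to a nonzero multiple of $\theta^4 e^{2\pi i x_1}/D$, nowhere vanishing. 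Hence $\bar\mu_1$ is surjective at every point, i.e., of rank $3$. The only real obstacle throughout is the bookkeeping of the nine operators \eqref{op:1}--\eqref{op:9}, which must be carefully specialized so that the cancellations producing the final scalar factors become transparent and the role of each hypothesis on $\theta$ visible.
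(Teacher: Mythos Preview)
Your argument is correct, and for $J^1_\theta$ and $J^2_\theta$ it coincides with the paper's: both specialize \eqref{eq:def:iwasawa} with $e=u=0$, take $f=q=0$, and set $g=r=\theta$ for rank $1$, respectively $g=\theta\sin(2\pi x_1)$, $r=\theta\cos(2\pi x_1)$ for rank $2$, then read off the rank from the matrix of $\bar\mu_1$ exactly as you do.

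For $J^3_\theta$ you take a genuinely different route. The paper obtains the family by multiplying all four functions in \eqref{eq:rank3:iwsawa} by $\theta$, so that none of $f,g,q,r$ vanishes identically; you instead keep $f=0$ and choose $g=\theta$, $q=\theta/2$, $r=\theta\,e^{2\pi i x_1}$. Your choice plugs more cleanly into the determinant criterion
\[
G=(gq-fr)\Bigl[g\,\xi_{\bar 1}(r)-r\,\xi_{\bar 1}(g)+\tfrac{1}{D}\bigl(\xi_{\bar 1}(g)\,\xi_{\bar 1}(q)-\xi_{\bar 1}(f)\,\xi_{\bar 1}(r)\bigr)\Bigr]:
\]
with $\xi_{\bar 1}(\theta)=0$ and $f=0$ the bracket collapses to $g\,\xi_{\bar 1}(r)=\pi i\,\theta^2 e^{2\pi i x_1}$, while $gq-fr=\theta^2/2$, so $G$ is a nowhere-vanishing multiple of $\theta^4 e^{2\pi i x_1}$, as you assert. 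The advantage of your construction is that $f=0$ kills several of the operators \eqref{op:1}--\eqref{op:9} outright and the factor $gq-fr=\theta^2/2$ never vanishes, making the surjectivity of $\bar\mu_1$ immediate; the paper's approach has the virtue of uniformity, since all three families arise by rescaling the fixed examples \eqref{eq:rank1:iwsawa}--\eqref{eq:rank3:iwsawa} by the single function $\theta$.
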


\begin{proof}
Let $\theta \in C^\infty(\I)$ be a never-vanishing function and let $J_\theta$ be the almost complex structure defined by the $(1,0)$-forms
\begin{align*}
\begin{split}
\omega^1 &:= \phi^1 + \theta f \, \phi^{\bar 2} + \theta g \, \phi^{\bar 3}, \\ 
\omega^2 &:= \phi^2  + \theta q \, \phi^{\bar 2} + \theta r \, \phi^{\bar 3}, \\
\omega^3 &:= \phi^3,
\end{split}
\end{align*}

where $f,g,q,r \in C^\infty (\I)$. The family of structures $J^1_\theta$ is obtained choosing the functions $f$, $g$, $q$, $r$ as in \eqref{eq:rank1:iwsawa}, since in this case the rank of $J_\theta$ is $1$. Further assuming that $\xi_{\bar 1}( \theta)=0$, the choice of $f$, $g$, $q$, $r$ as in \eqref{eq:rank2:iwsawa}, provides a family of structures of constant rank $2$. If we also assume $\abs{\theta} \le 1$, and take $f$, $g$, $q$, $r$ as in \eqref{eq:rank3:iwsawa}, then $J_\theta$ is a well-defined maximally-non-integrable almost complex structure.
\end{proof}

\subsection{Nakamura manifold.}\label{subsec:Nakamura}

The Nakamura manifold admits left-invariant structures of any constant rank. Since the computations are substantially the same as in \cref{sec:Iwasawa}, we omit the details. 
\vspace{.3cm}

Let $G$ be the Lie Group $\C \ltimes_\psi \C^2$, with coordinates $z_1$, $z_2$, $z_3$, where
\[
\psi(z_1) =
\begin{bmatrix}
e^{z_1} & 0 \\
0 & e^{-z_1}
\end{bmatrix}.
\]
The Nakamura manifold is the quotient
\[
\Nak := \Gamma \backslash G,
\]
where $\Gamma \subset G$ is a suitable lattice \cite{Nak75}. A basis of holomorphic $(1,0)$-forms that trivializes the complexified tangent bundle is given by
\[
\phi^1 = dz^1, \quad \phi^2 = e^{-z_1} dz^2, \quad \phi^2 = e^{z_1} dz^3,
\]
and their differentials are
\[
d \phi^1 =0, \quad d\phi^2 = - \phi^{12}, \quad d \phi^3 = \phi^{13}.
\]
$\Nak$ admits left-invariant structures of all possible ranks. We give an explicit example for each one. A structure of rank $3$ is defined by the $(1,0)$-forms
\begin{equation}\label{eq:rank3:nakamura}
\omega^1 = \phi^1 + \phi^{\bar 2} + \phi^{\bar 3}, \quad \omega^2 = \phi^2 + \frac{1}{2} \phi^{\bar 2}, \quad \omega^3 = \phi^3 + \frac{1}{2} \phi^{\bar 3}.
\end{equation}
A structure of rank $2$ is given by
\[
\omega^1 = \phi^1, \quad \omega^2 = \phi^2 + \phi^{\bar 3}, \quad \omega^3 = \phi^3 + 2 \phi^{\bar 2},
\]
while a structure of rank $1$ is given by
\[
\omega^1 = \phi^1, \quad \omega^2 = \phi^2 + \phi^{\bar 3}, \quad \omega^3 = \phi^3.
\]


\section{Curves of almost complex structures}\label{sec:curves}

In this section we slightly modify the argument from \cref{sec:parallelizable} to obtain a one-parameter version of our results. This shows that, under certain assumptions, in every neighborhood of a given almost complex structure on a parallelizable manifold there exists a maximally non-integrable structure. We also produce curves of almost complex structures on holomorphically parallelizable complex $3$-solvmanifolds along which the rank of $\bar \mu$ jumps from $0$ to $k \in \{ 1,2,3 \}$. More examples of such curves are presented later in section \ref{sec:homogeneous}.
\vspace{.3cm}

We briefly recall the theory of small deformations of almost complex structures. We refer the reader to the classical literature (e.g., \cite[Chapter~6]{Huy05}) for further details.

Let $M$ be an almost complex manifold and let $J$, $J'$ be two almost complex structures on $M$. The complexified tangent bundle admits two splittings
\[
    TM^\C \cong TM_{1,0} \oplus TM_{0,1} \, \, \text{ and } \, \, TM^\C \cong TM'_{1,0} \oplus TM'_{0,1}, 
\]
induced by $J$, respectively $J'$. If $J$ and $J'$ are close enough in the $C^0$ topology, then the projection
\[
\pi_{0,1} \colon  TM'_{0,1} \longrightarrow  TM_{0,1}
\]
is invertible and the map
\[
\Psi \colon TM_{0,1} \xrightarrow{\, \, \pi_{0,1}^{-1}} \, \, \, TM'_{0,1} \xrightarrow{\, \, \pi_{1,0} \, \,} TM_{1,0},
\]
is well-defined. One can think of $\Psi$ as an element of 
\[
T^*M^{0,1} \otimes TM_{1,0}.
\]
Conversely, fixed an almost complex structure $J$, any $(0,1)$-form with values in $TM_{1,0}$ defines a unique almost complex structure $J'$, by determining its $(1,0)$-forms.

Let now $M$ be a \emph{parallelizable} almost complex manifold, and fix an almost complex structure $J_0$ on $M$. Let $J(s)$ be the curve of almost complex structures on $M$ defined by the $(1,0)$-forms
\[
\omega^j_s := \phi^j + s f^j_k \phi^{\bar k}, \quad j=1,\dots,m,
\]
where $f^j_k \in C^\infty(M)$ and $s \in \C$. Clearly $J(0)=J_0$ and, in light of the above remarks, $J(s)$ defines a curve of almost complex structures on $M$ as long as
\[
D(s)|_x := \det 
\begin{bmatrix}
\Id_m & s \Phi|_x \\
 & \\
 \bar s \bar \Phi|_x & \Id_m 
\end{bmatrix}
\neq 0, \quad \forall x \in M,
\]
where $\Phi$ is as in \cref{matrix:F}. This is always the case if $s$ is small enough, say $\abs{s} < \epsilon$, since $D(0) = 1$ is constant on $M$ and $D(s)$ depends continuously on $s$ (indeed, $D(s)$ is a polynomial in $s$ and $\bar s$ at every point $x \in M$).\\
As we remarked in \cref{sec:outline}, a disadvantage of defining $J_1$ as in \cref{eq:deformed} is that not all almost complex structures on $M$ can be obtained from a fixed $J_0$. One advantage of the choice we made is that, starting from $J_0$ and taking $J_1$ as in \eqref{eq:deformed}, one can immediately build a curve of almost complex structures $J(s)$, $\abs{s} < \epsilon$, along which we can control the rank of the Nijenhuis tensor.

\begin{theorem}\label{thm:lower:bound}
Let $(M,J_0)$ be a parallelizable almost complex manifold. Let $J_1$ be an almost complex structure defined by
\[
\omega^j := \phi^j + f^j_k \phi^{\bar k}, \quad j=1,\dots m,
\]
where $f^j_k \in C^\infty(M)$. Consider the curve of almost complex structures $J(s)$ defined by
\[
\omega^j_s := \phi^j + s f^j_k \phi^{\bar k}, \quad \abs{s} < \epsilon, \quad j=1,\dots,m.
\]
Then, at every point $x \in M$, we have that
\[
\rk N_{J(s)}|_x \ge \max \{\rk N_{J_0}|_x, \rk N_{J_1}|_x  \}.
\]
\end{theorem}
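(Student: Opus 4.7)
The plan is to deduce the bound from the \emph{lower semi-continuity} of the rank of a parameter-dependent matrix, applied at the distinguished parameter values $s = 0$ and $s = 1$. At these values $J(s)$ specializes to $J_0$ and $J_1$, so $\rk \bar\mu_{J(s)}|_x$ equals $\rk N_{J_0}|_x$ and $\rk N_{J_1}|_x$, respectively.

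The first step is to run the procedure of Section \ref{sec:parallelizable} on the family $\{J(s)\}$, i.e.\ with the scaled functions $s f^j_k$ in place of $f^j_k$. This yields a matrix $M(s)$ representing $\bar\mu_{J(s)}$ in the basis $\{\omega^{\bar k \bar l}_s\}$, whose entries are rational in $s, \bar s$ with denominator a power of $D(s):= \det P_s$. The determinant $D(s)$ is polynomial in $s, \bar s$ with $D(0) \equiv 1$, and $D(1) \neq 0$ by the hypothesis that $J_1$ is well-defined; so one may assume that $D(s)$ does not vanish on $M$ throughout $\{\abs{s} < \epsilon\}$ and that this disk contains both $s = 0$ and $s = 1$. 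Then $M(s)$ is real-analytic in $s$ on the disk, with $M(0)$ representing $\bar\mu_{J_0}$ and $M(1)$ representing $\bar\mu_{J_1}$.

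The condition $\rk M(s)|_x \ge r$ is the non-vanishing of some $r \times r$ minor of $M(s)|_x$, an open condition on $s$. Consequently $s \mapsto \rk M(s)|_x$ is lower semi-continuous on the disk, for each fixed $x \in M$. Applying this at $s_0 = 0$ produces a neighborhood of $0$ where the rank is at least $\rk N_{J_0}|_x$; at $s_0 = 1$, a neighborhood of $1$ where it is at least $\rk N_{J_1}|_x$. Real-analyticity of $M(s)$ on the connected disk implies that $\rk M(s)|_x$ is constant and equal to its supremum $r_{\max}(x)$ off a proper real-analytic subvariety; combining the two local bounds forces $r_{\max}(x) \ge \max\{\rk N_{J_0}|_x, \rk N_{J_1}|_x\}$, which gives the desired inequality at every $s$ attaining the generic rank.

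The principal technical obstacle I anticipate is the bookkeeping needed to show that $M(s)$ is well-defined and real-analytic in $s$ on the full disk (so that no spurious singularities arise from $P_s^{-1}$) and that it specializes correctly at both $s = 0$ and $s = 1$. This requires carefully propagating the $s$-dependence through the inverse of $P_s$, the projection onto $A^{0,2}_{J(s)}$, and the derivative operators $F^{\bar j}_{\bar k \bar l}$ of Section \ref{sec:parallelizable}; once this is in place, the lower semi-continuity argument closes the proof.
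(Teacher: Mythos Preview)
Your overall strategy---express $\bar\mu_{J(s)}$ as a matrix $M(s)$ whose entries are rational in $s,\bar s$, then use the rigidity of such functions to propagate rank information---is essentially the paper's approach. But there is a real gap in the step where you write ``so one may assume that $D(s)$ does not vanish on $M$ throughout $\{|s|<\epsilon\}$ and that this disk contains both $s=0$ and $s=1$.'' This is false in general: knowing $D(0)\equiv 1$ and $D(1)\neq 0$ does not prevent $D(s)$ from vanishing at intermediate values. Already with $m=1$ and $f^1_1\equiv 2$ one has $D(s)=1-4|s|^2$, which vanishes on $|s|=1/2$, so any admissible $\epsilon$ is at most $1/2$ and cannot contain $s=1$. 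Your real-analyticity argument therefore has no connected domain on which to run, and the identity principle does not directly link the behavior near $0$ to the value at $1$.

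The paper closes this gap by exploiting that the entries of $M(s)$ are not merely real-analytic but have the specific form $P(s,\bar s)/D(s)^T$ with $P$ a \emph{polynomial}. Hence each $k_x\times k_x$ minor is a polynomial divided by a power of $D$. If all such minors vanished on a neighborhood of $0$, their numerators---being polynomials in $s,\bar s$---would vanish identically, and so the minors would vanish at every $s$ with $D(s)\neq 0$, in particular at $s=1$; this contradicts $\rk N_{J_1}|_x = k_x$. The polynomial identity theorem thus lets the argument ``jump across'' the possible zero locus of $D$, which your real-analytic version cannot do. Once you replace your connectedness assumption with this algebraic observation, your proof becomes the paper's.
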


\begin{proof}
The lower bound $\rk N_{J(s)}|_x \ge \rk N_{J_0}|_x$ is a rephrasing of the well-known fact that the rank is a lower semi-continuous function. To prove that $\rk N_{J(s)}|_x \ge \rk N_{J_1}|_x$, observe that at each point $x \in M$, we can write 
\[
\bar \mu_s \omega^j_s = G^j_{kl} (s, \bar s) \, \omega ^{\bar k \bar l}_s, \quad k < l,
\]
where $G^j_{kl}$ are either identically vanishing or rational functions of $s$, $\bar s$ of the form
\[
G^j_{kl} = \frac{P (s, \bar s)}{D(s)^t},
\]
for some $T \in \N$.
Assume by contradiction that $\rk N_{J(s)}|_x < k_x := \rk N_{J_1}|_x$, for small $s$. Our assumption implies that the determinant of each $k_x \times k_x$ submatrix of $\left( G^j_{kl} (s, \bar s) \right)$ vanishes in a neighborhood of $0$. Since the determinants are polynomial in $G^j_{kl}$, they must be identically zero away form $D^{-1}(0)$. However, $J_1$ has precisely rank $k_x$, so that at least one of the determinants does not vanish for $s=1$, and it cannot be identically zero, giving a contradiction.
\end{proof}

\begin{cor}\label{cor:mni:nbhd}
Let $J_0$ be an almost complex structure on a parallelizable manifold $M$. Assume that there exists a maximally non-integrable almost complex structure $J_1$ on $M$ obtained as in \eqref{eq:deformed}. Then every neighborhood of $J_0$ contains a maximally non-integrable almost complex structure.
\end{cor}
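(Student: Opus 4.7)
The plan is to deduce the corollary almost immediately from Theorem \ref{thm:lower:bound} by building a one-parameter family that joins $J_0$ to $J_1$ and observing that maximal non-integrability is preserved all along the curve, except possibly at $s=0$.

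First, I would consider the curve of almost complex structures $J(s)$ associated to the pair $(J_0, J_1)$ as defined in Theorem \ref{thm:lower:bound}, namely the one determined by the $(1,0)$-forms
\[
\omega^j_s := \phi^j + s f^j_k \phi^{\bar k}, \quad |s|<\epsilon, \quad j=1,\dots,m,
\]
where the $f^j_k$ are the smooth functions that give $J_1 = J(1)$. By the very construction discussed before the statement of Theorem \ref{thm:lower:bound}, $J(s)$ is a well-defined almost complex structure for $|s|$ small, since $D(s)$ is a polynomial in $s,\bar s$ with $D(0)\equiv 1$.

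Next, I apply Theorem \ref{thm:lower:bound}. At every point $x \in M$ we have
\[
\rk N_{J(s)}|_x \ge \rk N_{J_1}|_x.
\]
Since $J_1$ is maximally non-integrable by hypothesis, $\rk N_{J_1}|_x$ equals the maximal possible value of the rank (as a pointwise invariant, this value depends only on $m$). Hence $\rk N_{J(s)}|_x$ attains this maximum at every $x$, and therefore $J(s)$ is itself maximally non-integrable for every $s$ with $|s|<\epsilon$.

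Finally, I would observe that $J(s) \to J_0$ as $s \to 0$ in any reasonable topology on almost complex structures (for instance $C^\infty$), because the defining $(1,0)$-forms $\omega^j_s$ depend polynomially on $s$ and reduce to $\phi^j$ at $s=0$. Consequently, any prescribed neighborhood of $J_0$ contains $J(s)$ for all sufficiently small $s$, and each such $J(s)$ is maximally non-integrable. There is no real obstacle here: the substance of the argument lives in Theorem \ref{thm:lower:bound}, and the corollary is just the observation that the curve produced by the theorem furnishes maximally non-integrable structures arbitrarily close to $J_0$.
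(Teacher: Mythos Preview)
Your argument is correct and is exactly the intended one: the paper states the corollary without proof because it follows immediately from Theorem~\ref{thm:lower:bound} in the way you describe. The only cosmetic point is that the inequality from the theorem cannot hold at $s=0$ when $\rk N_{J_0}<\rk N_{J_1}$, so your claim that $J(s)$ is maximally non-integrable should read ``for every $s\neq 0$ with $|s|<\epsilon$''; you already flagged this in your opening sentence, and it does not affect the conclusion.
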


\begin{remark}
In section \ref{sec:invariant}, we produce abundance of examples where Theorem \ref{thm:lower:bound} and Corollary \ref{cor:mni:nbhd} can be applied (see Proposition \ref{prop:jump}).
\end{remark}

For specific parallelizable manifolds and almost complex structures, we follow section \ref{sec:outline} and compute the operator $\bar \mu_s$ associated to a curve $J(s)$, hence its rank. As a consequence, we obtain a one-parameter version of the results proved in \cref{sec:holomorphic}.

\begin{proposition}\label{prop:curves}
Fix $k \in \{ 0,1,2,3 \}$. Let $X$ be any manifold among $T^6$, $\I$, $\Nak$. There exist curves of almost complex structures $J(s)$ on $X$ such that
\begin{itemize}
    \item [$(i)$] $J_0$ is integrable;
    
    \item [$(ii)$] $\bar \mu_s$ has constant rank $k$ for all $s$ such that $\abs{s} < \epsilon$.
\end{itemize}
In particular, each neighborhood of $J_0$ contains almost complex structures whose Nijenhuis tensor has arbitrary constant rank.
\end{proposition}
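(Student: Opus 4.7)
For $k = 0$ the constant curve $J(s) \equiv J_0$ at the standard complex structure of $X$ suffices. For $k \in \{1,2,3\}$, the plan is to take a $J_1$ of constant rank $k$ already produced in Section \ref{sec:holomorphic} and dilate its defining functions linearly: if $J_1$ corresponds to the co-frame $\omega^j = \phi^j + f^j_k \phi^{\bar k}$, I would consider
\[
\omega^j_s := \phi^j + s\,f^j_k \,\phi^{\bar k}, \qquad |s| < \epsilon,
\]
which defines an almost complex structure $J(s)$ for $\epsilon$ small, since $D(0) = 1$. Item (i) is immediate because $J(0) = J_0$ is the standard complex structure on each of $T^6$, $\I$ and $\Nak$. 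Concretely, I would use the arbitrary-rank structures of \cite[Example 2.5.2]{CPS22} on $T^6$, the explicit functions \eqref{eq:rank1:iwsawa}, \eqref{eq:rank2:iwsawa} and \eqref{eq:rank3:iwsawa} on $\I$, and the three left-invariant structures exhibited in Section \ref{subsec:Nakamura} on $\Nak$.

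For (ii) the lower bound $\rk N_{J(s)} \ge k$ (for $s$ outside a proper real-analytic subset of $\{|s|<\epsilon\}$, hence in particular for small $s \neq 0$) will follow from Theorem \ref{thm:lower:bound} applied with $J_1 = J(1)$, combined with lower semi-continuity of the rank in $s$. For the matching upper bound, I would run the computation of Section \ref{sec:Iwasawa} (whose formulas, after the obvious modifications coming from the different $d\phi^j$, cover $T^6$ and $\Nak$ as well) on the scaled functions $s f^j_k$. Since the chosen $f^j_k$ are sparse enough that $D(s) \equiv 1$, the matrix of $\bar \mu_s$ in the basis $\{\omega^{\bar k\bar l}_s\}$ is polynomial in $s, \bar s$ and vanishes at $s = 0$. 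A direct inspection should then show that every $(k+1)\times(k+1)$ minor vanishes identically in $s, \bar s$, while at least one $k\times k$ minor is a nonzero monomial in $s$, yielding constant rank $k$ for $s \neq 0$. As a sanity check, on $\I$ with the rank-$1$ choice \eqref{eq:rank1:iwsawa} scaled by $s$, formulas \eqref{op:1}--\eqref{op:9} give a coefficient matrix with two equal nonzero rows and one identically zero row, so the rank is at most $1$ for all $s$ and exactly $1$ for $s \neq 0$.

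The principal obstacle I anticipate is the bookkeeping required to check that the higher minors vanish identically across the nine nontrivial cases. However, this should be a purely algebraic verification once the formulas of Section \ref{sec:Iwasawa} (and their counterparts on $T^6$ and $\Nak$) are specialized to the chosen sparse functions; I do not expect any new conceptual ingredient beyond Theorem \ref{thm:lower:bound} and the families already built in Section \ref{sec:holomorphic}.
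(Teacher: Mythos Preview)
Your plan matches the paper's proof: scale the rank-$k$ structures of Section~\ref{sec:holomorphic} by $s$ and verify the rank of $\bar\mu_s$ by direct computation of the coefficient matrix (the paper carries out the Nakamura $k=3$ case explicitly and declares the remaining eight cases analogous; your appeal to Theorem~\ref{thm:lower:bound} for the lower bound is exactly the alternative the paper records in the Remark following the proposition). One small slip to fix: $D(s)\equiv 1$ is not true for all of the chosen families---for the Nakamura rank-$3$ structure the paper's own formulas carry denominators $4-|s|^2$, and for the Iwasawa rank-$3$ choice \eqref{eq:rank3:iwsawa} one has $D=(1-|sq|^2)$---so the entries of $\bar\mu_s$ are in general rational rather than polynomial in $s,\bar s$; your minor-vanishing argument survives unchanged once you replace ``polynomial'' by ``rational with nonvanishing denominator on $\{|s|<\epsilon\}$''.
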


\begin{proof}
We show the computations for the maximally non-integrable almost complex structure on the Nakamura manifold. Consider the curve of almost complex structures on $\Nak$ defined by the co-frame of $(1,0)$-forms 
\[
\omega^1_s = \phi^1 + s\phi^{\bar 2} + s\phi^{\bar 3}, \quad \omega^2_s = \phi^2 + \frac{s}{2} \phi^{\bar 2}, \quad \omega^3_s = \phi^3 + \frac{s}{2} \phi^{\bar 3},
\]
obtained deforming the standard complex structure on $\Nak$ in the direction of the maximally non-integrable almost complex structure \eqref{eq:rank3:nakamura}. The same computations performed in section \ref{sec:holomorphic}, show that
\begin{align*}
    \bar \mu_s \omega^1_s &= -\frac{4s}{4-\abs{s}^2} \, \omega^{\bar 1 \bar 2} + \frac{4s}{4-\abs{s}^2} \, \omega^{\bar 1 \bar 3} + \frac{32 \abs{s}^2}{4-\abs{s}^2} \, \omega^{\bar 2 \bar 3}, \\
    & \\
    \bar \mu_s \omega^2_s &= \frac{2s}{4-\abs{s}^2} \, \omega^{\bar 1 \bar 2} + \frac{4 s^3 + 8 \abs{s}^2}{(4-\abs{s}^2)^2} \, \omega^{\bar 2 \bar 3}, \\
    & \\
    \bar \mu_s \omega^3_s &= \frac{2s}{4-\abs{s}^2} \, \omega^{\bar 1 \bar 3} - \frac{4 s^3 + 8 \abs{s}^2}{(4-\abs{s}^2)^2} \, \omega^{\bar 2 \bar 3},
\end{align*}
giving a maximally non integrable structure for all $s \neq 0$ such that $\abs{s} \neq 2$. Computations for the other cases proceed in a very similar fashion, replacing \eqref{eq:rank3:nakamura} with the appropriate almost complex structure from section \ref{sec:holomorphic}.
\end{proof}

\begin{remark}
Corollary \ref{cor:mni:nbhd}, together with the explicit structures provided in section \ref{sec:holomorphic}, gives an alternative proof of Proposition \ref{prop:curves} for the case $k = 3$.
\end{remark}

\section{Left-invariant almost complex structures on real \texorpdfstring{$6$}{}-nilmanifolds}\label{sec:invariant}

In this section we compute the rank of the Nijhenuis tensor of almost complex structures on $6$-dimensional, nilpotent (real) Lie algebras. As a consequence, for each $6$-nilmanifold we determine whether or not it admits a left-invariant almost complex structure whose Nijenhuis tensor has a given rank. If such a structure exists, we provide an explicit choice of complex parameters that allow to build it starting from an assigned almost complex structure. We also deduce a topological upper bound for the rank of $N_J$ on solvmanifolds obtained as a quotient of a completely solvable Lie group.
\vspace{.3cm}

We begin by proving our classification theorem for Lie algebras.

\begin{theorem}\label{thm:classification}
A $6$-dimensional, nilpotent, real Lie algebra admits an almost complex structure whose Nijenhuis tensor has rank $3$ if and only if it is isomorphic to one of
\[
\begin{array}{ll}
(0, 0, 12, 13, 14 + 23, 34 - 25),  & (0, 0, 12, 13, 14, 34 - 25),\\
(0, 0, 12, 13, 14 + 23, 24 + 15),  & (0, 0, 12, 13, 14, 23 + 15),\\
(0, 0, 12, 13, 23, 14),            & (0, 0, 12, 13, 23, 14 - 25),\\
(0, 0, 12, 13, 23, 14 + 25),       & (0, 0, 0, 12, 14 - 23, 15 + 34),\\
(0, 0, 0, 12, 14, 15 + 23),        & (0, 0, 0, 12, 14, 15 + 23 + 24),\\
(0, 0, 0, 12, 14, 15 + 24),        & (0, 0, 0, 12, 13, 14 + 35),\\
(0, 0, 0, 12, 23, 14 + 35),        & (0, 0, 0, 12, 23, 14 - 35),\\
(0, 0, 0, 12, 14, 24),             & (0, 0, 0, 12, 13 - 24, 14 + 23),\\
(0, 0, 0, 12, 14, 13 - 24),        & (0, 0, 0, 12, 13 + 14, 24),\\
(0, 0, 0, 12, 13, 14 + 23),        & (0, 0, 0, 12, 13, 24),\\
(0, 0, 0, 12, 13, 23).             &
\end{array}
\]
A $6$-dimensional, nilpotent, real Lie algebra does \textbf{not} admit an almost complex structure whose Nijenhuis tensor has rank $2$ if and only if it is isomorphic to one of 
\[
\begin{array}{ll}
(0, 0, 0, 12, 13, 23), & (0, 0, 0, 0, 0, 12 + 34),\\
(0, 0, 0, 0, 0, 12), & (0, 0, 0, 0, 0, 0).
\end{array}
\]
A $6$-dimensional, nilpotent, real Lie algebra does \textbf{not} admit an almost complex structure whose Nijenhuis tensor has rank $1$ if and only if it is isomorphic to one of
\[
\begin{array}{ll}
(0, 0, 12, 13, 14 + 23, 34 - 25), & (0, 0, 0, 0, 0, 0).
\end{array}
\]
\end{theorem}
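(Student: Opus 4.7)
The plan is to combine the parametrization procedure from Section \ref{sec:parallelizable} with a case-by-case analysis over the 34 isomorphism classes of $6$-dimensional nilpotent real Lie algebras. Since left-invariant almost complex structures on a nilmanifold $M = \Gamma\backslash G$ correspond bijectively to almost complex structures on the Lie algebra $\g$ of $G$, it suffices to establish the analogous statement at the Lie-algebra level. Salamon's work \cite{Sal01} already settles rank $0$ by classifying which $\g$ admit a complex structure; the remaining task is to characterize admissibility of ranks $1$, $2$, and $3$.

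For each $\g$ I would fix a convenient reference $(1,0)$-coframe $\{\phi^j\}_{j=1}^3$ and parametrize almost complex structures by $3 \times 3$ complex matrices $\Phi = (f^j_k)$ via
\[
\omega^j := \phi^j + f^j_k\phi^{\bar k}, \quad j = 1,2,3,
\]
subject only to the open condition $\det P \neq 0$. Expanding $d\omega^j$ and projecting onto $(0,2)$ via $P^{-1}$ expresses $\bar\mu$ as a $3 \times 3$ matrix whose entries are polynomial in the $f^j_k$, $\bar f^j_k$ divided by a power of $D = \det P$, so rank conditions become polynomial equalities and inequalities in the parameters.

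The existence parts of (i), (ii), (iii) would be proved by exhibiting, for each relevant Lie algebra, explicit numerical values of the $f^j_k$ realizing the claimed rank, checked by direct minor computations. For non-existence, the easy cases are immediate: the abelian $(0,\dots,0)$ forces $\bar\mu \equiv 0$; and for $(0,0,0,0,0,12)$ and $(0,0,0,0,0,12+34)$ the image of $d\colon\g^* \to \Lambda^2\g^*$ is one-dimensional, so each $d\phi^j$ lies on the same line, forcing $\rk\bar\mu \le 1$ and thus ruling out ranks $2$ and $3$ automatically.

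The main obstacle consists of the two delicate non-existence claims: rank $2$ on $(0,0,0,12,13,23)$ and rank $1$ on $(0,0,12,13,14+23,34-25)$. For the former, despite $d$ having three-dimensional image, one must show that for every $J$ the dimension of $\pi^{0,2}(d(\Lambda^{1,0}))$ never equals exactly $2$. For the latter, since this algebra admits no complex structure by Salamon, every $J$ has $\rk\bar\mu \ge 1$, and one must exclude exactly the value $1$, by showing that the simultaneous vanishing of all $2 \times 2$ minors of $\bar\mu$ forces also all entries to vanish, contradicting the absence of complex structures. Both arguments amount to elimination-theoretic computations on small polynomial systems, carefully exploiting the specific shape of the differentials of $\g$ together with $\det P \neq 0$. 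Once all $34$ cases are resolved in this manner, the Lie-algebra classification transfers to $6$-nilmanifolds via the bijection mentioned above.
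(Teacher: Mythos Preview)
Your outline matches the paper's strategy for the existence parts and for the trivial non-existence cases, but there are two genuine gaps on the non-existence side.

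First, for part (i) you must rule out rank $3$ on \emph{thirteen} algebras, not just the three you mention. Your dimension-of-image argument extends to all algebras of the form $(0,0,0,0,\ast,\ast)$ (since $\dim(\Ima d)\le 2$ there), and the paper records this as a one-line lemma; but three further algebras, $(0,0,0,12,13,14)$, $(0,0,0,12,14,15)$, $(0,0,12,13,14,15)$, have $\dim(\Ima d)=3$ or $4$ yet still admit no rank-$3$ structure. The paper handles these by the separate observation that every $de^k$ lies in $e^1\wedge A^1$, so after projecting to $(0,2)$ one lands in $\omega^{\bar 1}\wedge A^{0,1}$, a $2$-dimensional space. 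Your proposal contains no mechanism for these cases.

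Second, your plan to prove the two delicate non-existence statements by elimination on the parameters $f^j_k$ subject to $\det P\neq 0$ is not sound as stated, because the parametrization $\omega^j=\phi^j+f^j_k\phi^{\bar k}$ from a \emph{fixed} reference $J_0$ does not reach every almost complex structure: it misses precisely those $J$ whose $(1,0)$-space meets the $(0,1)$-space of $J_0$ nontrivially. The paper therefore does \emph{not} use this chart for non-existence; instead it works with an arbitrary $J$ directly via the generators $\phi^j=e^j-iJe^j$ and proves, e.g.\ for $(0,0,0,12,13,23)$, that $\rk\bar\mu$ is either $\le 1$ or $=3$ by a case split on whether the $(e^{jk})^{0,2}$ are dependent, the independent case being settled by a positive-definiteness argument coming from $J^2=-\Id$. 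For $(0,0,12,13,14+23,34-25)$ the paper likewise argues case-by-case on the vanishing and proportionality of the $(de^k)^{0,2}$. An elimination-theoretic approach could in principle be made rigorous by covering the space of $J$ with several charts, but you would need to say so and carry it out; as written, the proposal leaves these non-existence claims unproved.
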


The proof of the theorem is a collection of smaller results split among \cref{sec:rank3,sec:rank2,sec:rank1,sec:rank0}. In each of them, we deal with a different value for the rank. There are $34$ isomorphism classes of $6$-dimensional, nilpotent Lie algebras (\cite{Mag86}, see also \cite{Sal01}). We proceed to determine the rank of the almost complex structures existing on each of them. For the rest of the section we will work directly with almost complex structures defined on the elements of $\g^*$, adopting the corresponding notation.

\subsection{Structures of rank 3 (maximally non-integrable).}\label{sec:rank3}

$6$-dimensional, nilpotent Lie algebras for which $\ker d \cap A^1_\R$ is high-dimensional \emph{never} admit a maximally non-integrable almost complex structure.  This is a direct consequence of the following Lemma, that actually holds for any Lie algebra.

\begin{lemma}\label{lemma:bound}
Let $\g$ be a $2m$-dimensional Lie algebra and let $k = \dim_\R \left( \ker d \cap A^1_\R \right)$. Then for any almost complex structure $J$ on $\g$
\[
 \rk N_J \le 2m-k .
\]
\end{lemma}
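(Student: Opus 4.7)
The plan is to pass to complex forms and use the reformulation $\rk N_J = \rk \bar\mu$ recalled in the Introduction, where $\bar\mu \colon A^{1,0} \to A^{0,2}$ is the $\C$-linear operator $\phi \mapsto \pi^{0,2}(d\phi)$. Since $d$ commutes with complex conjugation, the complex subspace $Z := \ker d \cap A^1$ is precisely the complexification of $\ker d \cap A^1_\R$; in particular $\dim_\C Z = k$.

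The key observation is that every closed $(1,0)$-form lies in $\ker \bar\mu$: if $\alpha \in Z \cap A^{1,0}$ then $\bar\mu(\alpha) = \pi^{0,2}(d\alpha) = 0$. Hence bounding $\rk \bar\mu$ from above reduces to bounding $\dim_\C(Z \cap A^{1,0})$ from below. To do the latter, I would apply rank--nullity to the restriction of the projection $\pi^{0,1} \colon A^1 \to A^{0,1}$ to $Z$: its kernel is exactly $Z \cap A^{1,0}$, while its image lives in the $m$-dimensional space $A^{0,1}$. This yields $\dim_\C(Z \cap A^{1,0}) \ge k - m$, hence
\[
\rk \bar\mu \;\le\; m - \max(0, k-m) \;=\; \min(m, 2m-k) \;\le\; 2m - k.
\]

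I do not anticipate a serious obstacle: the argument is purely linear-algebraic and fits in a few lines. The bound is vacuous when $k \le m$ (the right-hand side is then at least $m$, and $\rk \bar\mu \le \dim_\C A^{1,0} = m$ automatically), so the content lies in the regime $k > m$, where closed $(1,0)$-forms necessarily produce a nontrivial kernel for $\bar\mu$. The only point that deserves explicit mention is the identification of the complex kernel of $d$ on $A^1$ with the complexification of its real kernel, which is immediate from the fact that $d$ is defined over $\R$.
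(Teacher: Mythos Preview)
Your argument is correct, but it takes a different route from the paper. The paper bounds the \emph{image} of $\bar\mu$ directly: since $\bar\mu = \pi^{0,2}\circ d$ on $A^{1,0}$, one has $\Ima\bar\mu \subseteq \pi^{0,2}\bigl(d(A^1)\bigr)$, and projection cannot increase dimension, so
\[
\rk N_J = \dim_\C \Ima\bar\mu \;\le\; \dim_\C d(A^1) \;=\; 2m - k,
\]
in a single line. You instead bound the \emph{kernel} of $\bar\mu$ from below, by producing at least $k-m$ independent closed $(1,0)$-forms via rank--nullity for $\pi^{0,1}|_Z$. Both arguments are elementary and of comparable length; the paper's avoids the case split on $k\le m$ versus $k>m$, while your version has the small advantage of making explicit \emph{which} $(1,0)$-forms witness the defect in rank (namely the closed ones), and of highlighting that the bound is vacuous when $k\le m$.
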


\begin{proof}
Let $J$ be any almost complex structure on a $2m$-dimensional Lie algebra $\g$. We have that 
\begin{align*}
\rk N_J &= \dim_\C \left( \Ima \bar \mu \cap A^{0,2} \right) \le \dim_\C \left( \Ima d \cap A^2 \right) \\ &= \dim_\C \C \langle de^1, \dots, de^{2m} \rangle = 2m - k,
\end{align*} 
obtaining the desired upper bound.
\end{proof}

It follows immediately that several $6$-dimensional, nilpotent Lie algebras do not admit a maximally non-integrable almost complex structure.

\begin{cor}\label{cor:mni:nla}
Let $\g$ be any $6$-dimensional, nilpotent Lie algebra of the form
\begin{equation}\label{eq:NLA:starred}
\left( 0,0,0,0,\ast,\ast \right).
\end{equation}
Then $\g$ does not admit maximally non-integrable almost complex structures.
\end{cor}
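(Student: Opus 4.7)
The plan is to apply Lemma \ref{lemma:bound} directly. For a Lie algebra $\g$ presented as $(0,0,0,0,\ast,\ast)$, the structure equations tell us that $de^1 = de^2 = de^3 = de^4 = 0$, so the four linearly independent real $1$-forms $e^1, e^2, e^3, e^4$ all belong to $\ker d \cap A^1_\R$. Hence $k := \dim_\R (\ker d \cap A^1_\R) \ge 4$.

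I would then invoke Lemma \ref{lemma:bound} with $2m = 6$ to conclude that for any almost complex structure $J$ on $\g$,
\[
\rk N_J \le 2m - k \le 6 - 4 = 2.
\]
Since a maximally non-integrable almost complex structure on a $6$-dimensional manifold has $\rk N_J = 3$ (because $\bar\mu \colon A^{1,0} \to A^{0,2}$ maps between spaces of complex dimension $\binom{3}{1} = 3$ and $\binom{3}{2} = 3$), no such $J$ can exist on $\g$.

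The argument is essentially immediate from the lemma, so there is no genuine obstacle; the only thing to be careful about is the convention for ``maximally non-integrable'' in dimension six, which I would state explicitly for the reader before applying the bound.
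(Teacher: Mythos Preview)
Your proposal is correct and follows exactly the same approach as the paper, which states the corollary as an immediate consequence of Lemma~\ref{lemma:bound}. The only extra content you add is the explicit verification that $k \ge 4$ and the remark on what ``maximally non-integrable'' means in dimension six, both of which are implicit in the paper's one-line proof.
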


There are three more $6$-dimensional, nilpotent Lie algebras that do not admit maximally non-integrable structures.

\begin{proposition}
None of the following Lie algebras admits a maximally non-integrable almost complex structure:
\begin{equation}\label{sporadic:mni}
\left( 0,0,0,12,13,14 \right), \quad \left( 0,0,0,12,14,15 \right), \quad \left( 0,0,12,13,14,15 \right).
\end{equation}
\end{proposition}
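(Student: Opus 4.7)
The plan is to identify a common structural feature of the three Lie algebras in \eqref{sporadic:mni} that prevents $\bar\mu$ from being surjective, refining \Cref{lemma:bound} in a case where the upper bound $2m - k$ it provides is $3$ or $4$ and therefore does not immediately rule out maximal non-integrability.

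The key observation is that in each of the three Lie algebras, every exact $2$-form is divisible (on the left) by $e^1$: a direct inspection of the structure equations shows
\[
\Ima d \subseteq e^1 \wedge A^1_\R
\]
in all three cases (indeed each $de^j$ is either zero or of the form $e^{1k}$). I would then argue that this divisibility constraint alone forces $\rk N_J \le 2$ for any almost complex structure $J$ on $\g$, and in particular precludes maximal non-integrability.

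To carry this out, fix an arbitrary almost complex structure $J$ and decompose $e^1 = \xi + \bar\xi$ with $\xi = (e^1)^{1,0}$. Since $e^1$ is a non-zero real $1$-form and $A^1_\R \cap A^{1,0} = \{0\}$, the component $\bar\xi \in A^{0,1}$ is non-zero. A one-line bidegree computation gives, for any $\beta \in A^1$,
\[
\pi^{0,2}(e^1 \wedge \beta) = \bar\xi \wedge \beta^{0,1} \in \bar\xi \wedge A^{0,1}.
\]
Extending $d$ complex-linearly to $A^1$, this yields
\[
\Ima \bar\mu = \pi^{0,2} \bigl( d(A^{1,0}) \bigr) \subseteq \pi^{0,2}\bigl(\Ima d \otimes \C\bigr) \subseteq \bar\xi \wedge A^{0,1}.
\]
Because $\bar\xi \in A^{0,1}$ satisfies $\bar\xi \wedge \bar\xi = 0$, the subspace $\bar\xi \wedge A^{0,1} \subseteq A^{0,2}$ has complex dimension at most $\dim_\C A^{0,1} - 1 = 2$. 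Therefore $\rk N_J \le 2 < 3$, and no $J$ on $\g$ can be maximally non-integrable.

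There is no real obstacle here; the only verification is the bookkeeping that $\Ima d \subseteq e^1 \wedge A^1_\R$ for the three algebras listed in \eqref{sporadic:mni}, which is immediate from reading off the structure equations. The same argument would in fact apply to any Lie algebra whose exact $2$-forms are all divisible by a single real $1$-form, but these three are the only such nilpotent $6$-algebras not already covered by \Cref{cor:mni:nla}.
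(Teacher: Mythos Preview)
Your proof is correct and follows essentially the same approach as the paper: both arguments rest on the observation that $\Ima d \subseteq e^1 \wedge A^1$ for each of the three algebras, then project to bidegree $(0,2)$ to conclude $\Ima \bar\mu \subseteq (e^1)^{0,1} \wedge A^{0,1}$, a space of complex dimension $2$. Your $\bar\xi$ is exactly the paper's $\tfrac{1}{2}\,\omega^{\bar 1}$; the only difference is presentational (you justify $\bar\xi \neq 0$ and the dimension count slightly more explicitly).
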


\begin{proof}
Let $J$ be an almost complex structure on $\g$, where $\g$ is any Lie algebra from \eqref{sporadic:mni}. Define the $(1,0)$-form
\[
\omega^1 := e^1 - i J e^1,
\]
and complete it to a basis of $(1,0)$-forms $\{ \omega^1,\omega^2,\omega^3 \}$. Noting that the differentials of $1$-forms can be written as $e^{1j}$, for some $j \in \{ 2,3,4,5 \}$, we have that
\[
\Ima d \cap A^2 = \C \langle de^1, \dots de^6 \rangle \subseteq e^1 \wedge A^1.
\]
Taking the projection on degree $(0,2)$, one can conclude that
\begin{align*}
\rk N_J &= \dim_\C \left( \Ima \bar \mu \cap A^{0,2} \right) \\
&\le \dim_\C \C \langle (e^1)^{0,1} \rangle \wedge (A^1)^{0,1} \\
&= \dim_\C \left( \omega^{\bar 1} \wedge A^{0,1} \right)=2,
\end{align*}
where $\left( \cdot \right) ^{0,1}$ denotes the projection on degree $(0,1)$.
\end{proof}

We complete the classification with the following existence result:

\begin{proposition}\label{existence:mni}
Any $6$-dimensional, nilpotent Lie algebra different from \eqref{eq:NLA:starred} and \eqref{sporadic:mni} admits a maximally non-integrable almost complex structure.
\end{proposition}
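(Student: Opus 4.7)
The plan is to prove the existence statement by exhibiting, for each of the $21$ isomorphism classes listed in Theorem \ref{thm:classification}(i), a concrete co-frame of $(1,0)$-forms defining a left-invariant almost complex structure whose Nijenhuis tensor has rank $3$. For each $\g=(de^1,\dots,de^6)$ the procedure is the same: pick three complex $1$-forms $\omega^1,\omega^2,\omega^3\in A^1$ so that $\{\omega^1,\omega^2,\omega^3,\omega^{\bar 1},\omega^{\bar 2},\omega^{\bar 3}\}$ is a $\C$-basis of $A^1$, compute $d\omega^j$ from the structure equations, re-express the result in the co-frame $\{\omega^j,\omega^{\bar j}\}$, and project onto $(0,2)$-forms. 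The output is
\[
\bar\mu\,\omega^j \;=\; c^j_{12}\,\omega^{\bar 1\bar 2}+c^j_{13}\,\omega^{\bar 1\bar 3}+c^j_{23}\,\omega^{\bar 2\bar 3},
\]
and maximal non-integrability amounts to the single inequation $\det(c^j_{kl})\neq 0$.

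To find the $\omega^j$ systematically I would apply the machinery of Section \ref{sec:outline}, with the smooth functions $f^j_k$ in \eqref{eq:deformed} replaced by complex constants. Starting from some convenient background $J_0$ (an integrable one when $\g$ appears in Salamon's list, otherwise a $J_0$ chosen so that $\bar\mu_0$ involves as few monomials as possible), the deformed structure $J_1$ has left-invariant $(1,0)$-forms and the rank-$3$ condition becomes a polynomial inequation in the finitely many entries of the constant matrix $\Phi$ of \eqref{matrix:F}. Any choice of $\Phi$ in the complement of the resulting algebraic hypersurface, with $\det P\neq 0$, solves the problem; explicit small integer or dyadic values can then be verified by direct substitution.

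The main obstacle is not conceptual but organizational: one must actually carry out this computation for all $21$ classes, including those with many nonzero structure constants and no integrable complex structure (e.g.\ $(0,0,12,13,14+23,34-25)$ or $(0,0,0,12,14,24)$), where the coefficients $c^j_{kl}$ become algebraically bulky and the determinant hard to keep in closed form. In practice I would present the resulting structures in a single reference table, analogous to \cref{table}, and the proposition then reduces to inspecting this table: for each of the $21$ algebras one reads off the explicit co-frame, together with the value of the $3\times 3$ determinant, and checks that it is nonzero. Because the whole search is over constants, the verification is an entirely finite algebraic computation once the table is built.
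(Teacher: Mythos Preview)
Your plan is essentially the paper's own proof: deform a fixed left-invariant $J_0$ by a constant matrix $\Phi$ as in \eqref{eq:deformed}, and for each of the $21$ algebras record explicit values of the parameters for which $\det(c^j_{kl})\neq 0$, collecting them in \cref{table}. The only (harmless) difference is that the paper uses the single uniform background $J_0$ given by $\phi^1=e^1+ie^2$, $\phi^2=e^3+ie^4$, $\phi^3=e^5+ie^6$ for every algebra, rather than varying $J_0$ case by case.
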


\begin{proof}
We prove the existence of a maximally non-integrable almost complex structure by explicitly exhibiting it. Let $\g = \R \langle e_1, \dots, e_6 \rangle$ be a nilpotent Lie algebra. Consider the almost complex structure $J_0$ defined by the co-frame of $(1,0)$-forms
\[
\phi^1 := e^1 + i e^2, \quad \phi^2 := e^3 + i e^4, \quad \phi^3 := e^5 + i e^6. 
\]
Let $J_1$ be the almost complex structure defined by the $(1,0)$-forms
\begin{align*}
\omega^1 &:= \phi^1 + e \, \phi^{ \bar 1} + f \, \phi^{\bar 2} + g \, \phi^{\bar 3}, \\
\omega^2 &:= \phi^2 + p \, \phi^{ \bar 1} + q \, \phi^{\bar 2} + r \, \phi^{\bar 3}, \\
\omega^3 &:= \phi^3 + s \, \phi^{ \bar 1} + t \, \phi^{\bar 2} + u \, \phi^{\bar 3},
\end{align*}
with $e,f,g,p,q,r,s,t,u$ complex parameters satisfying the condition $\det (P) \neq 0$, where $P$ is defined as in \eqref{matrix:P}. For the choice of parameters described in the second column of table \ref{table}, $J_1$ is a maximally non-integrable almost complex structure on the corresponding Lie algebra.
\end{proof}

\subsection{Structures of rank 2.}\label{sec:rank2}
The following proposition specifies which $6$-dimensional, nilpotent Lie algebras admit an almost complex structure of rank $2$. In its proof, as well as in other several instances in the rest of the section, we will use the following elementary fact:

\begin{lemma}\label{lemma:proportional}
Let $\g^* =\R \langle e^1, \dots, e^{2m} \rangle$ be the dual of a Lie algebra $\g$. Fix an almost complex structure $J$ on $\g^*$. Suppose that $\left( e^{jk} \right)^{0,2} =0$ for some indices $j,k \in \{1,\dots,2m \}$. Then $\left( e^k \right)^{0,1}$ is proportional to $\left( e^j \right)^{0,1}$.
\end{lemma}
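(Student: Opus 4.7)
The plan is to reduce the statement to the elementary fact that two $1$-forms have vanishing wedge product if and only if they are linearly dependent. The work is almost entirely bookkeeping of bidegrees.

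First I would decompose the real $1$-forms by type. Since $e^j, e^k \in A^1_\R$, one can write
\[
e^j = (e^j)^{1,0} + (e^j)^{0,1}, \qquad e^k = (e^k)^{1,0} + (e^k)^{0,1},
\]
where, because $e^j$ and $e^k$ are real, $(e^j)^{1,0} = \overline{(e^j)^{0,1}}$ and $(e^k)^{1,0} = \overline{(e^k)^{0,1}}$. Expanding the wedge product $e^{jk} = e^j \wedge e^k$ into the four terms coming from this decomposition, the unique summand contributing to the $(0,2)$-part of $e^{jk}$ is the product of the two $(0,1)$-pieces, so
\[
(e^{jk})^{0,2} = (e^j)^{0,1} \wedge (e^k)^{0,1}.
\]

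Next I would invoke the standard fact from exterior algebra: in $\bigwedge^\bullet (\g^*)^\C$, two $1$-forms $\alpha, \beta$ satisfy $\alpha \wedge \beta = 0$ if and only if $\alpha$ and $\beta$ are linearly dependent over $\C$. Applying this to $\alpha = (e^j)^{0,1}$ and $\beta = (e^k)^{0,1}$, the hypothesis $(e^{jk})^{0,2} = 0$ forces $(e^j)^{0,1}$ and $(e^k)^{0,1}$ to be linearly dependent, i.e.\ proportional (allowing the degenerate case in which one of them vanishes).

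There is no real obstacle here; the only subtlety worth flagging is the convention on the word \emph{proportional}, which must be read to include the cases in which $(e^j)^{0,1}$ or $(e^k)^{0,1}$ vanishes identically, since the linear dependence condition produced by the wedge criterion encompasses exactly this possibility.
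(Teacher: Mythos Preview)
Your proof is correct and follows essentially the same idea as the paper's: both reduce the hypothesis to $(e^j)^{0,1} \wedge (e^k)^{0,1} = 0$ and conclude linear dependence, the paper doing this via coordinates in a fixed basis of $(0,1)$-forms rather than by a direct appeal to the wedge criterion. One small sharpening: your hedge about the degenerate case is unnecessary, since the conjugation identity $(e^j)^{1,0} = \overline{(e^j)^{0,1}}$ you already recorded forces $(e^j)^{0,1} \neq 0$ for any nonzero real $1$-form $e^j$ --- and the paper uses exactly this observation to exclude the vanishing case and obtain genuine proportionality.
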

\begin{proof}
Fix a basis of $(0,1)$-forms $\{ \omega^{\bar j} \}_{j=1}^m$, and write
\begin{align*}
    (e^j)^{0,1} &= A_1 \, \omega^{\bar 1} + \dots + A_m \, \omega^{\bar m}, \\
    (e^k)^{0,1} &= B_1 \, \omega^{\bar 1} + \dots + B_m \, \omega^{\bar m},
\end{align*}
with $A_j, B_j \in \C$. The condition $(e^{jk})^{0,2}=0$ implies that the matrix 
\[
\begin{bmatrix}
A_1 & \cdots & A_m \\
B_1 & \cdots & B_m
\end{bmatrix}
\]
has rank $1$. Since $(e^j)^{0,1}$ and $(e^k)^{0,1}$ cannot be the zero form because they are the projection on degree $(0,1)$ of a real form, the only possibility is that $(e^j)^{0,1}$ and $(e^k)^{0,1}$ are proportional to each other.
\end{proof}

\begin{proposition}\label{existence:rank:2}
Every $6$-dimensional, nilpotent Lie algebra different from
\begin{equation}\label{rank2}
\left( 0,0,0,12,13,23 \right), \, \left( 0,0,0,0,0,12+34 \right), \, \left( 0,0,0,0,0,12 \right), \, \left( 0,0,0,0,0,0 \right),
\end{equation}
admits an almost complex structure of rank $2$.
\end{proposition}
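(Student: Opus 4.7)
The plan is to follow the template of Proposition \ref{existence:mni}: for each of the thirty isomorphism classes of $6$-dimensional, nilpotent, real Lie algebras not appearing in \eqref{rank2}, I would exhibit an explicit almost complex structure of rank $2$ and record the resulting data in a table of the same format as Table \ref{table}. Concretely, starting from the reference almost complex structure $J_0$ with $(1,0)$-coframe $\phi^1 := e^1 + ie^2$, $\phi^2 := e^3 + ie^4$, $\phi^3 := e^5 + ie^6$, I would consider the nine-parameter deformation
\[
\omega^1 := \phi^1 + e\phi^{\bar 1} + f\phi^{\bar 2} + g\phi^{\bar 3}, \quad \omega^2 := \phi^2 + p\phi^{\bar 1} + q\phi^{\bar 2} + r\phi^{\bar 3}, \quad \omega^3 := \phi^3 + s\phi^{\bar 1} + t\phi^{\bar 2} + u\phi^{\bar 3},
\]
and, for each algebra, specify complex constants $e, f, g, p, q, r, s, t, u$ yielding $\det(P) \ne 0$ and $\rk \bar\mu_1 = 2$.

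My heuristic for locating suitable parameters divides the thirty algebras into two groups. For the twenty-one algebras on which Proposition \ref{existence:mni} already provides a maximally non-integrable structure, I would start from the rank-$3$ parameter values appearing in Table \ref{table} and degenerate them --- typically by zeroing one coefficient or imposing a linear relation --- until exactly one of the three vectors $\bar\mu_1\omega^j \in A^{0,2} \simeq \C^3$ becomes a linear combination of the other two while those remain linearly independent. For the remaining nine algebras, namely those of the form \eqref{eq:NLA:starred} not in \eqref{rank2} together with the three in \eqref{sporadic:mni}, maximally non-integrable structures do not exist; however, a direct inspection of the differentials shows that $\dim_\C \left( \Ima d \cap A^2 \right) \ge 2$ in each case, and I would choose $J_1$ so that the $(0,2)$-projections of two linearly independent generators of $\Ima d$ remain independent while the third collapses into their span.

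The main obstacle is not conceptual but combinatorial: the search must be carried out algebra by algebra, and each candidate certified by checking that the $3 \times 3$ matrix whose rows record the coefficients of $\bar\mu_1\omega^j$ in a basis of $A^{0,2}$ has vanishing determinant but a non-zero $2 \times 2$ minor. The hardest cases will be those in which the constraints $\det(P) \ne 0$ and $\rk \bar\mu_1 = 2$ interact non-trivially --- for example, the algebras with few free generators of $d$, where the $(0,2)$-image of $d$ is very constrained and the generic choice tends to fall either to rank $1$ or to rank $3$. Once the table is written down, however, each row reduces to a mechanical linear algebra verification of the same type as the ones already performed for the Iwasawa and Nakamura manifolds in Section \ref{sec:holomorphic}.
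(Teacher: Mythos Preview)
Your plan is correct for the proposition as literally stated, and it coincides with the existence half of the paper's proof: the paper also exhibits, for each of the thirty non-exceptional algebras, explicit parameters $e,f,g,p,q,r,s,t,u$ deforming the reference structure $J_0$ (with $(1,0)$-coframe $\phi^j = e^{2j-1} + i e^{2j}$), and records them in the rank-$2$ column of Table~\ref{table}. Your heuristic for splitting into the two groups (degenerate a rank-$3$ structure versus build one from scratch on the algebras of type \eqref{eq:NLA:starred} and \eqref{sporadic:mni}) is reasonable, though in practice the paper simply lists the parameters without explaining how they were found.

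One point of comparison: the paper's proof of this proposition goes beyond the stated claim and also proves \emph{non-existence} of rank-$2$ structures on the four algebras in \eqref{rank2}. For three of them this is an immediate consequence of Lemma~\ref{lemma:bound}, but for $(0,0,0,12,13,23)$ the paper gives a substantial case-by-case argument (using Lemma~\ref{lemma:proportional} and an analysis of whether the forms $(e^{12})^{0,2}$, $(e^{13})^{0,2}$, $(e^{23})^{0,2}$ are dependent) showing that any almost complex structure on this algebra has rank either $\le 1$ or exactly $3$. This non-existence content is not required by the proposition as phrased, but it is the piece that feeds into Theorem~\ref{thm:classification}, so you should be aware that the paper chose to house it here rather than in a separate lemma.
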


\begin{proof}
As a consequence of Lemma \ref{lemma:bound}, any Lie algebra among $\left( 0,0,0,0,0,12+34 \right)$, $\left( 0,0,0,0,0,12 \right)$, $\left( 0,0,0,0,0,0 \right)$ admits only structures of at most rank $1$.

Let $J$ be an almost complex structure on $(0, 0, 0, 12, 13, 23)$. We directly prove that $J$ cannot have rank $2$ by studying the $(0,2)$-degree part of the forms $e^{12}$, $e^{13}$, $e^{23}$.\\
First, suppose that $(e^{jk})^{0,2}=0$ for some $j,k \in \{ 1,2,3 \}$. Due to the symmetries in the indices $1,2,3$, and $4,5,6$ of the Lie algebra, we can assume that $(e^{12})^{0,2}=0$. By Lemma \ref{lemma:proportional}, $(e^1)^{0,1}$ and $(e^2)^{0,1}$ are proportional to the same $(0,1)$-form $\alpha$. Let $\{ \phi^j := e^j - i J e^j \}_{j=1}^6$ be a set of generators of $A^{1,0}$. If $(e^1)^{0,1}$ and $(e^2)^{0,1}$ are proportional to $\alpha$, then $\bar \mu \phi^j \in \C \langle \alpha \wedge (e^3)^{0,1} \rangle$, implying that $\rk N_J \le 1$.\\
Suppose now that $(e^{jk})^{0,2} \neq 0$, for all $ jk \in \{ 12, 13, 23 \}$. We further split the argument into three cases.

\textbf{Case 1:} two among $(e^{12})^{0,2}$, $(e^{13})^{0,2}$, $(e^{23})^{0,2}$ are multiple of each other. Again, for the symmetries of the Lie algebra we can assume that $(e^1)^{0,1} \wedge (e^2)^{0,1} = A (e^1)^{0,1} \wedge (e^3)^{0,1} $, with $A \in \C \setminus \{ 0 \}$. This implies that 
\[
(e^2)^{0,1} = A (e^3)^{0,1} + \gamma,
\]
where $\gamma$ is a $(0,1)$-form in the kernel of the map $(e^1)^{0,1} \wedge \bullet$. Set $\omega^{\bar 1} := e^1 + i J e^1$ and complete it to a basis of $(0,1)$-forms $\{ \omega^{\bar 1}$, $\omega^{\bar 2}$, $\omega^{\bar 3} \} $, so that 
\[
\gamma = C_1 \omega^{\bar 1} + C_2  \omega^{\bar 2} + C_3 \omega^{\bar 3}, \quad \text{with } C_j \in \C.
\]
The condition $(e^1)^{0,1} \wedge \gamma =0$ implies $C_2 = C_3 =0$, so that $\gamma$ is a multiple of $\omega^{\bar 1}$, and thus of $(e^1)^{0,1}$, since $\omega^{\bar 1} = 2 (e^1)^{0,1}$. This implies that $(e^{23})^{0,2}$ is a multiple of $(e^{12})^{0,2}$ (thus of $(e^{13})^{0,2}$):
\[
(e^2)^{0,1} \wedge (e^3)^{0,1} = \left( A (e^3)^{0,1} + \gamma \right) \wedge (e^3)^{0,1} = 2A C_1 (e^{13})^{0,2}.
\]
Since the $(e^{jk})^{0,2}$, for $ jk \in \{ 12, 13, 23 \}$, are all multiple of the same $(0,2)$-form, $N_J$ has at most rank $1$.

\textbf{Case 2:} one among $(e^{12})^{0,2}$, $(e^{13})^{0,2}$, $(e^{23})^{0,2}$ is a linear combination of the remaining two. By symmetry, we can assume that 
\[
(e^{12})^{0,2} = A (e^{13})^{0,2} + B (e^{23})^{0,2} = \left( A (e^1)^{0,1} + B (e^2)^{0,1}\right) \wedge (e^3)^{0,1},
\]
where $A$, $B \in \C$ are both non-zero, or else we would go back to the first case. Since $(e^{12})^{0,2} \neq 0$, set $\omega^{\bar j} := e^j + i J e^j$, $j = 1,2$, and complete to a basis of $(0,1)$-forms $\omega^{\bar 1}$, $\omega^{\bar 2}$, $\omega^{\bar 3}$. Proceeding as in the previous case, it is straightforward to see that
\[
(e^3)^{0,1} = C (e^1)^{0,1} + D (e^2)^{0,1},
\]
implying that the forms $(e^{13})^{0,2}$, $(e^{23})^{0,2}$ are both multiple of $(e^{12})^{0,2}$. The conclusion $\rk N_J \le 1$ follows as in the first case.

\textbf{Case 3:} all of the forms $(e^{12})^{0,2}$, $(e^{13})^{0,2}$, $(e^{23})^{0,2}$ are independent over $\C$.\\
We prove that $\bar \mu$ has necessarily rank $3$. Consider the $(1,0)$-forms $\phi^j := e^j - i J e^j $, $j=1,2,3$. The projections on degree $(0,2)$ of the forms $e^{jk}$ can be expressed in terms of the $\phi^j$ as
\[
(e^{12})^{0,2} = \frac{1}{4} \phi^{\bar 1 \bar 2}, \quad (e^{13})^{0,2} = \frac{1}{4} \phi^{\bar 1 \bar 3}, \quad (e^{23})^{0,2} = \frac{1}{4} \phi^{\bar 2 \bar 3},
\]
that, by assumption, are independent over $\C$. This implies that also the $\phi^j$, $j=1,2,3$ are independent, and so they are a basis of $(1,0)$-forms. In terms of the basis $e^j$, $J$ has the expression
\[
J=
\begin{bmatrix}
A & B \\
C & D
\end{bmatrix}.
\]
From $J^2 = -\Id$, we obtain the relation
\begin{equation}\label{key}
A^2 + BC = -\Id,
\end{equation}
Computing the rank of $\bar \mu$ amounts to compute the rank of the matrix $B$, since
\[
\bar \mu \phi^j = - \frac{i}{4} \left( B_{j1} \phi^{\bar 1 \bar 2} + B_{j2} \phi^{\bar 1 \bar 3} + B_{j3} \phi^{\bar 2 \bar 3} \right) , \quad j = 1,2,3.
\]
Now, the complex $1$-forms $\{ \phi^j, \phi^{\bar j} \}_{j=1}^3$ can be written in terms of the $e^j$ as 
\[
\left(
\phi^1,
\phi^2,
\phi^3,
\phi^{\bar 1},
\phi^{\bar 2},
\phi^{\bar 3} \right)^t
=
Q 
\left( e^1,
e^2,
e^3,
e^4,
e^5,
e^6 \right)^t,
\]
where $Q$ is the block matrix
\[
Q=
\begin{bmatrix}
\Id - iA & -iB \\
\\
\Id + i A & iB
\end{bmatrix}.
\]
Since $\{ \phi^j, \phi^{\bar j} \}_{j=1}^3$ is a basis of $1$-forms, $Q$ must be invertible and the matrix 
\[
Q^* Q =
\begin{bmatrix}
2 \left( \Id - iA \right) \left( \Id + iA \right) & i \left[ \left(I-iA \right) B - \left(I+iA \right) B  \right] \\
-i \left[ B \left(I+iA \right) - B \left(I-iA \right)  \right] & B^2 
\end{bmatrix}
\]
is positive-definite. In particular its principal minor
\[
\left( \Id + iA \right) \left( \Id - iA \right) = \Id + A^2 
\]
is also positive-definite, and by \cref{key} it is equal to $BC$. This forces $B$, and thus $\bar \mu$, to have rank $3$, concluding the proof of the fact that $(0, 0, 0, 12, 13, 23)$ has no almost complex structure whose Nijenhuis tensor has precisely rank $2$.

Finally, we prove that each of the remaining $6$-dimensional, nilpotent Lie algebras admits an almost complex structure of rank $2$ by explicitly exhibiting it, as we did in the proof of Proposition \ref{existence:mni}. The explicit choice of constants providing the desired structure can be found in the third column of \cref{table}.
\end{proof}

\subsection{Structures of rank 1.}\label{sec:rank1}

The only $6$-dimensional nilpotent Lie algebras not admitting an almost complex structure of rank $1$ are 
\begin{equation}\label{exception:rank1}
(0, 0, 0, 0, 0, 0) \quad \text{and} \quad (0, 0, 12, 13, 14 + 23, 34 - 25).
\end{equation}
For the former, this is again an immediate consequence of Lemma \ref{lemma:bound}, while for the latter it is the content of the following

\begin{proposition}\label{prop:norank1}
Any almost complex structure on the Lie algebra
\[
\g := (0, 0, 12, 13, 14 + 23, 34 - 25)
\]
has at least rank $2$. 
\end{proposition}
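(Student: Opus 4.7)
The plan is to argue by contradiction, assuming $\rk N_J \leq 1$, and to rule out separately the integrable case $\rk N_J = 0$ and the rank-one case $\rk N_J = 1$.

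For $\rk N_J = 0$, the structure $J$ would be a complex structure on $\g$, which is impossible by Salamon's classification \cite{Sal01}: $(0, 0, 12, 13, 14 + 23, 34 - 25)$ is not among the $6$-dimensional nilpotent Lie algebras admitting a complex structure. A self-contained verification uses Salamon's nilpotency theorem: any such complex structure admits a basis $\{\omega^1, \omega^2, \omega^3\}$ of $(1,0)$-forms with $d\omega^j \in \Lambda^2 \langle \omega^1, \ldots, \omega^{j-1}, \omega^{\bar 1}, \ldots, \omega^{\overline{j-1}} \rangle$, but the structure equations of $\g$ successively force $\omega^1 \in \C \langle e^1, e^2 \rangle$, $\omega^2 \in \C \langle e^1, e^2, e^3 \rangle$, and $\omega^3 \in \C \langle e^1, e^2, e^3, e^4 \rangle$; the six forms $\omega^j, \omega^{\bar j}$ then all lie in the $4$-dimensional complex subspace $\C \langle e^1, e^2, e^3, e^4 \rangle$ and cannot span $A^1_\C$.

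For $\rk \bar \mu = 1$, I set $\alpha^j := (e^j)^{0,1}$ and use that $e \mapsto (e)^{0,1}$ is an $\R$-linear isomorphism $A^1_\R \to A^{0,1}$, so that $\{\alpha^1, \ldots, \alpha^6\}$ is $\R$-linearly independent. The structure equations give, for every $\omega = \sum_j c_j e^j \in A^{1,0}$,
\[
\bar \mu \omega \;=\; c_3 \, P_3 + c_4 \, P_4 + c_5 \, P_5 + c_6 \, P_6,
\]
where $P_3 := \alpha^{12}$, $P_4 := \alpha^{13}$, $P_5 := \alpha^{14} + \alpha^{23}$, $P_6 := \alpha^{34} - \alpha^{25}$. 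I split on $d := \dim_\C \langle P_3, P_4, P_5, P_6 \rangle \in \{1, 2, 3\}$; the value $d = 0$ would force $\bar \mu \equiv 0$ and is already excluded.

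The model subcase is $d = 1$ with $P_3 \neq 0$. Writing $P_j = \lambda_j P_3$ for $j = 4, 5, 6$, the equation $\alpha^1 \wedge \alpha^3 = \lambda_4 \alpha^1 \wedge \alpha^2$ gives $\alpha^3 \in \C \langle \alpha^1, \alpha^2 \rangle$; feeding this into $P_5 = \lambda_5 P_3$ yields $\alpha^4 \in \C \langle \alpha^1, \alpha^2 \rangle$; feeding both into $P_6 = \lambda_6 P_3$ yields $\alpha^2 \wedge \alpha^5 \in \C \alpha^{12}$, and hence $\alpha^5 \in \C \langle \alpha^1, \alpha^2 \rangle$. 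Thus $\alpha^1, \ldots, \alpha^5$ lie in a complex $2$-plane, a real $4$-dimensional subspace of $A^{0,1}$, contradicting their $\R$-linear independence. The remaining subcases of $d = 1$ (with $P_3 = 0$ and a different $P_j \neq 0$) are handled analogously using that $P_j$ as reference. For $d = 2$ and $d = 3$, the proportionalities no longer come directly from the $P_j$'s; instead, one picks two linearly independent $\omega^a \in \ker \bar \mu$, writes the two $(1,0)$-conditions $\sum_j c^a_j \alpha^j = 0$ together with the linear relations among $P_3, P_4, P_5, P_6$, and deduces that a complex $2$-dimensional subspace of $A^{0,1}$ again contains too many of the $\alpha^j$'s. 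The main obstacle will be the bookkeeping for $d = 2$, where the pair of linear relations among the $P_j$'s admits several inequivalent configurations that must be analyzed separately.
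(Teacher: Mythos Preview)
Your approach mirrors the paper's: both argue by contradiction, dispatch the integrable case via Salamon's classification, and organize the rank-$1$ case as an analysis of the $(0,2)$-projections $P_j := (de^j)^{0,2}$. You split on $d = \dim_\C \langle P_3, P_4, P_5, P_6 \rangle$, whereas the paper splits first on whether $P_3$ and $P_4$ vanish (its Cases~A, B, C) and then on proportionality among the remaining $P_j$'s; these are different traversals of the same case tree. Your $d=1$ argument (the model subcase $P_3 \neq 0$) is correct and is essentially the reasoning of the paper's Lemmas~\ref{lemma:proportional} and~\ref{lemma:independent}. Both your proposal (for $d \ge 2$) and the paper (for its Case~C, $P_3 \neq 0$) leave the bulkiest subcases only sketched.

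One substantive difference worth flagging: in Case~B the paper obtains its contradictions by extracting entries of the matrix of $J$ (via Lemma~\ref{lemma:dependent}) and forcing an impossible imaginary diagonal entry, whereas you plan to work entirely with the $\alpha^j$'s and the $(1,0)$-conditions $\sum_j c^a_j \alpha^j = 0$. This is arguably cleaner, but your stated expectation that the contradiction for $d \ge 2$ will again be ``too many $\alpha^j$'s in a complex $2$-plane'' is not quite right. For instance, when $d=3$ one first shows (using one element of $\ker\bar\mu$) that $P_3 = 0$, hence $\alpha^1, \alpha^3, \alpha^4$ form a basis of $A^{0,1}$; then the kernel of $(c_3,\dots,c_6)\mapsto \sum c_j P_j$ is $\C(1,0,0,0)$, which forces both $\omega^1,\omega^2 \in \ker\bar\mu$ to lie in $\C\langle e^1,e^2,e^3\rangle$, and the $(1,0)$-condition then makes them proportional --- the contradiction is with $\dim\ker\bar\mu = 2$, not with a dimension count on the $\alpha^j$'s. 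Expect the $d=2$ analysis to require both mechanisms.
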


The proof of Proposition \ref{prop:norank1} proceeds assuming the existence of a structure of rank $1$ on $\g$ and splitting the argument into four main cases, according to whether or not the projection on degree $(0,2)$ of $d$-exact $2$-forms is zero. In each of the cases, we reach an absurd by contradicting the following well-known Lemma on the linear algebra of almost complex structures (or a direct consequence of it), of which we give a proof for the sake of completeness.

\begin{lemma}\label{lemma:independent}
Let $V$ be a $2m$-dimensional real vector space and let $\{ e_j \}_{j=1}^{2m}$ be a basis of $V$. Fix an almost complex structure $J$ on $V$ and consider the projection $\pi^{0,1} \colon V^\C \rightarrow V^{0,1}$ . Then the subspace
\[
S := \C \langle \pi^{0,1}(e_j), \pi^{0,1}(e_k), \pi^{0,1}(e_l) \rangle, \quad j \neq k \neq l,
\]
has at least complex dimension $2$.
\end{lemma}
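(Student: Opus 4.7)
The plan is to argue by contradiction. Assume that the complex subspace $S$ satisfies $\dim_\C S \leq 1$, so that there exists a single $(0,1)$-vector $v \in V^{0,1}$ (possibly zero) and scalars $\alpha,\beta,\gamma \in \C$ with
\[
\pi^{0,1}(e_j) = \alpha v, \qquad \pi^{0,1}(e_k) = \beta v, \qquad \pi^{0,1}(e_l) = \gamma v.
\]

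First I would dispose of the degenerate cases. Any real vector $w \in V$ decomposes in $V^\C$ as $w = \pi^{1,0}(w) + \pi^{0,1}(w)$, and since $w$ is real, $\pi^{1,0}(w) = \overline{\pi^{0,1}(w)}$. In particular, $\pi^{0,1}(w) = 0$ forces $w = 0$. Applying this to $e_j,e_k,e_l$ (which are nonzero as members of a basis), we conclude that $v \neq 0$ and that $\alpha,\beta,\gamma$ are all nonzero. Consequently
\[
e_j = \alpha v + \bar\alpha \bar v, \qquad e_k = \beta v + \bar\beta \bar v, \qquad e_l = \gamma v + \bar\gamma \bar v.
\]

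The key observation is that $v$ and $\bar v$ are $\C$-linearly independent in $V^\C$, since $v$ lies in the $(-i)$-eigenspace of $J$ on $V^\C$ and $\bar v$ lies in the $(+i)$-eigenspace, and $v \neq 0$. Therefore the real subspace
\[
W := \{ c v + \bar c \bar v : c \in \C \} \subseteq V
\]
has real dimension exactly $2$ (the map $c \mapsto cv + \bar c \bar v$ is $\R$-linear and injective). The three displayed expressions show $e_j, e_k, e_l \in W$, contradicting the fact that $e_j, e_k, e_l$ are $\R$-linearly independent as part of the basis $\{e_1,\dots,e_{2m}\}$.

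I do not anticipate any serious obstacle: the lemma is essentially a linear-algebraic repackaging of the reality condition on $V$, and the only subtlety is ensuring that the scalars $\alpha,\beta,\gamma$ are nonzero before passing to the $2$-real-dimensional subspace $W$, which is handled by the observation that $\pi^{0,1}$ is injective on the real subspace spanned by any single nonzero real vector.
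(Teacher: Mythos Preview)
Your proof is correct and takes a genuinely different route from the paper's. The paper also argues by contradiction assuming $\dim_\C S = 1$, but then proceeds computationally: it writes $Je_p = J_p^q e_q$ in coordinates, expresses the proportionality of $(e_j + iJe_j)$, $(e_k + iJe_k)$, $(e_l + iJe_l)$ through explicit complex constants, and by comparing coefficients in the basis $\{e_p\}$ eventually extracts a relation of the form $(J_j^j)^2 = -1$, an absurdity since $J_j^j$ is real. Your argument is more structural: once all three projections lie on the complex line $\C v$, reality forces $e_j, e_k, e_l$ to lie in the real $2$-plane $W = \{cv + \bar c\bar v : c \in \C\}$, immediately contradicting their linear independence. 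Your approach is shorter, avoids any choice of coordinates for $J$, and makes transparent the underlying principle (namely, that $\pi^{0,1}$ restricted to $V$ is an $\R$-linear isomorphism onto $V^{0,1}$, so three independent real vectors cannot map into a complex line). The paper's computation, on the other hand, yields as a byproduct explicit constraints on the matrix entries of $J$, which is consistent with the hands-on style used elsewhere in the paper (cf.\ Lemma \ref{lemma:dependent}).
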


\begin{proof}
Summing over repeated indices, we can write $J e_j = J_j^k e_k$, $J_j^k \in \R$. Since $e_j$ is a real vector, its $(0,1)$-part cannot be zero and is given by
\[
\pi^{0,1}(e_j) = \frac{1}{2} \left( e_j + i J e_j \right).
\]
Thus $S$ is at least $1$-dimensional. Assume by contradiction that $S$ has precisely dimension $1$. Then the vectors $( e_k + i J e_k )$ and $( e_l + i J e_l )$ are proportional, up to non-vanishing complex constants, to $( e_j + i J e_j )$, i.e.,
\[
( e_j + i J e_j)  = A ( e_k + i J e_k)  = B ( e_l + i J e_l), \text{ with } A, B \in \C \setminus \{0\}. 
\]
Writing explicitly the vectors in terms of the element of the basis, we obtain a system of equations involving the coefficients $A$, $B$, $J_j^k$. We are interested in the part involving only $e_j$ and $e_k$:
\[
\begin{cases}
1 + i J_j^j = i A J_j^k = i B J_j^l, \\
\\
i J_k^j = A (1 + i J_k^k) = i B J_k^l.
\end{cases}
\]
From the first equation we have that $J_j^k \neq 0$, $J_j^l \neq 0$, obtaining the value for the constants 
\[
A = -i \frac{(1 + i J_j^j)}{J_j^k}, \quad B = -i \frac{(1 + i J_j^j)}{J_j^l},
\]
and, substituting in the second equation, we are left with
\[
i J_k^j = -i \frac{(1 + i J_j^j)(1 + i J_k^k)}{J_j^k} = \frac{J_k^l}{J_j^l}(1 + i J_j^j).
\]
From the equality $i J_k^j = J_k^l / J_j^l (1 + i J_j^j)$, we deduce that both $J_k^l$ and $J_k^j$ must vanish. The remaining equation $(1 + i J_j^j)(1 + i J_k^k) = 0$ leads to the contradiction $(J_j^j)^2 = -1$, concluding the proof.
\end{proof}

We will also need the following lemma:

\begin{lemma}\label{lemma:dependent}
Let $\g^* =\R \langle e^1, \dots, e^{2m} \rangle$ be the dual of a Lie algebra $\g$. Fix an almost complex structure $J$ on $\g^*$. Suppose that $\left( e^k \right)^{0,1}$ is proportional to $\left( e^j \right)^{0,1}$ for some $j,k \in \{1,\dots, 2m \}$. Then $J e^j$, $J e^k \in \R \langle e^j, e^k \rangle$.
\end{lemma}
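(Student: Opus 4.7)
The plan is to unpack the proportionality hypothesis componentwise. I would write the assumption as $(e^k)^{0,1} = \lambda\, (e^j)^{0,1}$ for some $\lambda \in \C$, and use the standard formula $(e^\ell)^{0,1} = \tfrac{1}{2}(e^\ell + i J e^\ell)$ to rewrite this identity inside $(\g^*)^\C$. Setting $\lambda = a + ib$ with $a,b \in \R$ and expanding, the proportionality becomes
\[
e^k + i J e^k = (a e^j - b J e^j) + i(b e^j + a J e^j).
\]

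The second step is to separate real and imaginary parts (which is legitimate since $e^j, e^k, Je^j, Je^k$ all lie in the real space $\g^*$), obtaining the two relations $e^k = a e^j - b J e^j$ and $J e^k = b e^j + a J e^j$. Provided $b \neq 0$, the first relation solves for $J e^j = (a/b) e^j - (1/b) e^k \in \R\langle e^j, e^k \rangle$, and substituting into the second yields $J e^k = ((a^2+b^2)/b) e^j - (a/b) e^k \in \R\langle e^j, e^k \rangle$, which is the conclusion. (As a sanity check, applying $J$ again to $J e^j$ recovers $-e^j$ using $a^2 + b^2 = |\lambda|^2$, so the formulas are consistent with $J^2 = -\Id$.)

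The only point requiring care, and the main potential obstacle, is to rule out the degenerate case $b = 0$. If $b = 0$ and $j \neq k$, the real-part equation reduces to $e^k = a e^j$ with $a \in \R$, which contradicts the linear independence of the basis elements $e^j$ and $e^k$ (unless $a = 0$, in which case $e^k = 0$, also impossible). The case $j = k$ is trivial. This dichotomy forces $b \neq 0$ whenever the statement is nontrivial, so the explicit formulas above apply and the lemma follows.
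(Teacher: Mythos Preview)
Your proof is correct. Both your argument and the paper's proceed by writing out the proportionality $(e^k)^{0,1}=\lambda\,(e^j)^{0,1}$ and separating real and imaginary parts; the difference is one of presentation. The paper expands $(e^j)^{0,1}$ in the full basis $e^1,\dots,e^{2m}$, encodes proportionality as a rank-$1$ condition on the $2\times 2m$ matrix of coefficients $(J^j_p+i\delta^j_p)$, and then reads off $J^j_p=J^k_p=0$ for $p\notin\{j,k\}$ (together with the relations $J^k_k=-J^j_j$ and $(J^j_j)^2+J^j_kJ^k_j=-1$) from the vanishing of the $2\times 2$ minors. You instead work coordinate-free, solving directly for $Je^j$ and $Je^k$ as explicit real linear combinations of $e^j$ and $e^k$; your handling of the degenerate case $b=0$ via linear independence of basis vectors is exactly right. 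Your route is a bit cleaner and yields the conclusion without ever naming the individual entries of $J$, while the paper's coordinate approach has the minor advantage of recording the precise $2\times 2$ block of $J$ on $\R\langle e^j,e^k\rangle$ along the way.
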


\begin{proof}
With the same notation from the proof of Lemma \ref{lemma:independent}, we have, passing to the dual, that 
\[
( e^j )^{0,1} = - \frac{i}{2} \left( J^j_p + i \delta^j_p \right) e^p.
\]
The forms $( e^j )^{0,1}$ and $( e^k )^{0,1}$ are multiple of each other if and only if the matrix
\[
\begin{bmatrix}
J^j_1 & \cdots & J^j_j +i & \cdots & J^j_k & \cdots & J^j_{2m} \\
J^k_1 & \cdots & J^k_j & \cdots & J^k_k +i & \cdots & J^k_{2m}
\end{bmatrix}
\]
has rank $1$. Imposing that the determinant of each of its $2 \times 2$ minors vanishes and separating real and imaginary part, we see that $J$ must satisfy the following relations
\begin{equation}\label{eq:proportional}
    \begin{cases}
    J^k_k = - J^j_j & \\[2pt]
    (J^j_j)^2 + J^j_k J^k_j = -1 & \\[2pt]
    J^j_p = J^k_p=0 & \text{if $p \notin \{ j,k \}$.}
    \end{cases}
\end{equation}
Our claim follows from \eqref{eq:proportional}, since $Je^j = J^j_j e^j + J^j_k e^k$ and $Je^k = J^k_j e^j - J^j_j e^k$.
\end{proof}

We can now prove that any almost complex structure on $\g$ has at least rank $2$.

\begin{proof}[Proof of Proposition \ref{prop:norank1}]
There are no complex structures on $\g$ \cite{Sal01}.
Let $J$ be an almost complex structure on $\g$ and assume by contradiction that it has rank $1$. The $(1,0)$-forms $\{ \phi^j := e^j - i J e^j \}_{j=1}^6$ are a set of generators of $A^{1,0}$, so we can compute the rank of $\bar \mu$ focusing only on the $\phi^j$. We have that
\[
\bar \mu \phi^j = -i \left( J^j_k + i \delta^j_k \right) \left( de^k \right)^{0,2}.
\]
The rank of $\bar \mu$ is the rank of a suitable submatrix obtained removing from $J + i \Id_6$ the columns corresponding to the indices for which $\left( de^k \right)^{0,2} =0$, and taking linear combinations of such columns if $\left( de^j \right)^{0,2}$ is a non-zero multiple of $\left( de^k \right)^{0,2}$. Since $de^1 = de^2 =0$, we focus on a $6 \times 4$ matrix, identifying for each case which among $\left( de^k \right)^{0,2}$ vanish and which are proportional to each other.

\textbf{Case A: $\mathbf{(e^{12})^{0,2}=(e^{13})^{0,2}=0}$.}\\
By Lemma \ref{lemma:proportional}, $(e^{12})^{0,2}=0$ implies that $(e^1)^{0,1}$ and $(e^2)^{0,1}$ are proportional. Similarly, $(e^1)^{0,1}$ is proportional to $(e^3)^{0,1}$, contradicting Lemma \ref{lemma:independent}.

\textbf{Case B: $\mathbf{(e^{12})^{0,2} = 0}$ and $\mathbf{(e^{13})^{0,2} \neq 0}$.}\\
Again by Lemma \ref{lemma:proportional}, $(e^2)^{0,1} = A (e^1)^{0,1}$ for some $A \in \C \setminus \{ 0 \}$. Consider the $2$-form 
\[
\alpha := (e^{14 + 23})^{0,2} = (e^1)^{0,1} \wedge \left( (e^4)^{0,1} + A (e^3)^{0,1}  \right).
\]
If $\alpha$ is not a multiple of $(e^{13})^{0,2}$, then $(e^4)^{0,1}$ is independent of $(e^1)^{0,1}$ and $(e^3)^{0,1}$, giving a basis of $(1,0)$-forms $\{ \phi^1, \phi^3, \phi^4 \}$. In terms of such a basis, we have that
\[
(e^{13})^{0,2} = \frac{1}{4} \phi^{\bar 1 \bar 3}, \quad (e^{14 + 23})^{0,2} = \frac{1}{4} ( \phi^{\bar 1 \bar 4} + A \phi^{\bar 1 \bar 3}),
\]
and
\[
(e^{34 - 25})^{0,2} = \frac{1}{4} \phi^{\bar 3 \bar 4} + \phi^{\bar 1} \wedge \theta,
\]
for some $(0,1)$-form $\theta$. These $(0,2)$-forms are independent, thus the rank of $\bar \mu$ is determined by the corresponding columns of $J + i \Id_6$, i.e., by the rank of the $6 \times 3$ matrix $ U := ( J^j_k + i \delta^j_k )$, $j =1,\dots, 6$, $k =4,5,6$. Since $\bar \mu$ has rank $1$, so does $U$, and we can apply repeatedly Lemma \ref{lemma:dependent} to its columns to deduce the condition $J^3_3 = -i$, reaching an absurd.\\
The proof of case B is concluded if we prove that $\alpha$ cannot be a multiple of $(e^{13})^{0,2}$. Assume by contradiction that $(e^{14 + 23})^{0,2}$ is a multiple of $(e^{13})^{0,2}$. Then necessarily $(e^4)^{0,1}$ is a linear combination
\[
(e^4)^{0,1} = B (e^1)^{0,1} + C (e^3)^{0,1}.
\]
If $B \neq 0$, we can redefine the elements of the basis $e^j$ setting
\[
\hat{e}^4 = e^4 - C e^3,
\]
so that $( \hat{e}^4 )^{0,1} = B (e^1)^{0,1} = B/A \, (e^2)^{0,1}$. This contradicts Lemma \ref{lemma:independent}. \\
If $B=0$, then $ (e^4)^{0,1} = C (e^3)^{0,1}$, giving a simple expression for the projection on degree $(0,2)$ of $de^6$: 
\[
\beta := (e^{34 - 25})^{0,2} = - A (e^1)^{0,1} \wedge (e^5)^{0,1}.
\]
If $\beta$ is proportional to $(e^{13})^{0,2}$, then 
\[
(e^5)^{0,1} = D (e^1)^{0,1} + E (e^3)^{0,1}.
\]
As above, if $D=0$, we immediately get a contradiction to Lemma
\ref{lemma:independent}. The same follows when $E \neq 0$, by redefining $\hat e ^5 = e^5 -E e^3$.\\
If $\beta$ is not proportional to $(e^{13})^{0,2}$, then the matrix that determines the rank of $\bar \mu$ is
\[
\begin{bmatrix}
0 & 0 & J^3_4 & J^4_4 + i & J^5_4 & J^6_4 \\
0 & 0 & J^3_6 & J^4_6 & J^5_6 & J^6_6 +i
\end{bmatrix}^T.
\]
By Lemma \ref{lemma:dependent} applied to $ (e^3)^{0,1}$ and $(e^4)^{0,1}$, it must be $J^3_6 = J^4_6=0$. Imposing the condition $\rk \bar \mu = 1$, we easily obtain the contradiction $J^3_3 = -i$, proving that $\alpha$ is not a multiple of $(e^{13})^{0,2}$, and thus proving our claim in case B.

\textbf{Case C: $\mathbf{(e^{12})^{0,2} \neq 0}$.}\\
The proof is similar to that of case B, with slightly longer computations.
\end{proof}

An existence result completes the classification.

\begin{proposition}\label{existence:rank1}
Every $6$-dimensional, nilpotent Lie algebra different from \eqref{exception:rank1} admits an almost complex structure of rank $1$.
\end{proposition}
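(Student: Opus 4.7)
The plan is to follow exactly the template of Propositions \ref{existence:mni} and \ref{existence:rank:2}: for each of the $32$ nilpotent Lie algebras not appearing in \eqref{exception:rank1}, I would exhibit an explicit almost complex structure of rank $1$ by prescribing a suitable choice of the complex parameters $e,f,g,p,q,r,s,t,u$ that deform the reference structure $J_0$ with $(1,0)$-forms $\phi^1=e^1+ie^2$, $\phi^2=e^3+ie^4$, $\phi^3=e^5+ie^6$ into the structure $J_1$ introduced in the proof of Proposition \ref{existence:mni}. The resulting data would be recorded in a fourth column of table \ref{table}, parallel to the maximally non-integrable and rank-$2$ columns already described there.

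First, I would compute, once and for all, the matrix of $\bar \mu_1$ with respect to the basis $\{\omega^{\bar j\bar k}\}_{j<k}$ in terms of the $(d\phi^j)^{0,2}$ and the parameters, following the procedure of \cref{sec:outline}. Then, for each algebra, I would substitute the explicit expressions of $d\phi^j$ and search for the sparsest choice of non-zero parameters producing a $3\times 3$ coefficient matrix of rank exactly $1$, while keeping $\det(P)\neq 0$ (automatic for small parameters). A natural first attempt is to activate a single parameter, typically $f$, $g$ or $r$, so that almost all entries of the coefficient matrix vanish and the rank is forced to be $0$ or $1$; if this collapses to rank $0$, a second parameter can be switched on so as to produce a single new column proportional to the first. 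Theorem \ref{thm:lower:bound} can also be invoked: any deformation witnessing rank $\ge 1$ immediately supplies nearby rank-$1$ structures, provided one can certify the rank does not jump higher.

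The main obstacle will be the sheer case analysis and, for a handful of algebras, the delicate balance required to hit rank exactly $1$. Algebras admitting an integrable deformation tend to collapse to rank $0$ under naive perturbations, whereas algebras with many independent differentials $de^k$, in particular those appearing in the rank-$3$ list of \cref{sec:rank3}, tend to overshoot to rank $2$ or $3$. In these borderline cases I would use Lemmas \ref{lemma:proportional} and \ref{lemma:dependent} as a guide: by engineering appropriate proportionalities among the forms $(e^j)^{0,1}$, one keeps $\Ima \bar \mu_1$ one-dimensional. For algebras structurally closest to the exceptional $(0,0,12,13,14+23,34-25)$ treated in Proposition \ref{prop:norank1}, particular care is needed to avoid recreating the obstructions identified in its proof, namely to avoid forcing both $(e^{12})^{0,2}=0$ and the dependencies among $(e^1)^{0,1}, (e^2)^{0,1}, (e^3)^{0,1}$ that ruled out rank $1$ there.
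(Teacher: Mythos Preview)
Your plan is correct and matches the paper's own proof, which likewise reduces to exhibiting, for each of the $32$ algebras, an explicit choice of the parameters $e,\dots,u$ yielding rank exactly~$1$, with the values recorded in the fourth column of Table~\ref{table}. Your aside about Theorem~\ref{thm:lower:bound} is a red herring here (it only supplies \emph{lower} bounds on the rank along the curve, so it cannot certify rank exactly~$1$), but the core strategy of direct case-by-case construction is precisely what the paper does.
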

\begin{proof}
The existence of a structure of rank $1$ is proved as in Proposition \ref{existence:mni}. The constants providing the desired structure are presented in the fourth column of \cref{table}.
\end{proof}

\subsection{Complex structures.}\label{sec:rank0}

The classification of $6$-dimensional, nilpotent Lie algebras admitting a complex structure was carried out by Salamon \cite{Sal01}. For the sake of completeness, in the last column of \cref{table} we give explicit constants that allow to obtain examples of complex structures following the idea given in the proof of Proposition \ref{existence:mni}. The only Lie algebras on which a complex structure cannot be obtained in this way are $(0, 0, 0, 12, 23, 14 - 35)$ and $(0, 0, 0, 0, 12, 14 + 25)$. In these two cases, it is immediate to check that the co-frame of $(1,0)$-forms
\[
\phi^1 := e^1 + i e^3, \quad \phi^2 := e^4 + i e^5, \quad \phi^3 := -e^2 + i e^6
\]
defines a complex structure on $(0, 0, 0, 12, 23, 14 - 35)$, while the co-frame
\[
\phi^1 := e^1 + i e^2, \quad \phi^2 := e^4 + i e^5, \quad \phi^3 := e^3 + i e^6
\]
defines a complex structure on $(0, 0, 0, 0, 12, 14 + 25)$.

\begin{remark}
Complex structures on $6$-dimensional, nilpotent Lie algebras have two types of canonical basis \cite[Theorem 2.5]{Sal01}. Type $(I)$ has the form
\[
\omega^1 = e^1 - i e^2, \quad \omega^1 = e^3 - i e^4, \quad \omega^1 = e^5 - i e^6,
\]
while type $(II)$ can be written as
\[
\omega^1 = e^1 - i e^2, \quad \omega^1 = e^4 - i e^5, \quad \omega^1 = e^3 - i e^6.
\]
We point out that when we give examples of explicit structures, we are deforming a (possibly non-integrable) structure with a basis of type $(I)$. The Lie algebras on which a complex structure cannot be obtained in this way are precisely those that admit only complex structures of type $(II)$ \cite{Sal01}.
\end{remark}

\subsection{Consequences on homogeneous manifolds.}\label{sec:homogeneous}

The classification by rank of almost complex structures on $6$-dimensional, nilpotent Lie algebras allows to establish which $6$-nilmanifold admit a left-invariant almost complex structure of a certain rank.

\begin{theorem}\label{thm:nilmanifolds}
Let $M= \Gamma \backslash G$ be a $6$-nilmanifold and let $\g$ be the Lie algebra of $G$. Then $M$ admits a left-invariant almost complex structure of rank $k$ if and only if $\g$ admits an almost complex structure of rank $k$, according to Theorem \ref{thm:classification}.
\end{theorem}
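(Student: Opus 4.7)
The plan is to reduce the statement to Theorem \ref{thm:classification} by exploiting the standard correspondence between left-invariant tensors on a Lie group and tensors on its Lie algebra, then observing that this correspondence preserves the rank of the Nijenhuis tensor.

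First I would recall that, since $G$ is a connected Lie group, any left-invariant almost complex structure on $G$ is uniquely determined by its value at the identity, yielding a linear endomorphism $J_e \colon \g \to \g$ with $J_e^2 = -\mathrm{Id}_\g$. Conversely, any almost complex structure on $\g$ extends to a unique left-invariant almost complex structure on $G$. Because the discrete subgroup $\Gamma$ acts on $G$ by left multiplication, every left-invariant tensor on $G$ descends to $M = \Gamma \backslash G$, and conversely any left-invariant almost complex structure on $M$ lifts to a left-invariant almost complex structure on $G$. Thus left-invariant almost complex structures on $M$ are in canonical bijection with almost complex structures on $\g$.

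Next I would verify that the Nijenhuis tensor of a left-invariant $J$ on $M$ is itself left-invariant. Indeed, $N_J$ is built out of $J$, Lie brackets of vector fields, and tensorial operations, all of which commute with pullback by left translation when applied to left-invariant vector fields. Consequently, if $\{E_1,\dots,E_{2m}\}$ is a left-invariant frame, the functions $g^k_{ij} \in C^\infty(M)$ defined by $N_J(E_i,E_j) = g^k_{ij} E_k$ are constant. This shows that $\rk N_J$ is constant on $M$ and coincides with the rank of the corresponding algebraic Nijenhuis tensor $N_{J_e} \colon \g \otimes \g \to \g$ on the Lie algebra.

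Combining these two observations, a left-invariant almost complex structure of rank $k$ on $M$ exists if and only if an almost complex structure of rank $k$ exists on $\g$. Applying Theorem \ref{thm:classification} then yields the statement. The proof is essentially a bookkeeping argument, so the only genuine content lies in Theorem \ref{thm:classification}; here there is no real obstacle beyond making sure the rank really transfers, which is immediate from the left-invariance of $N_J$. In particular, Theorem \ref{intro:classification} from the introduction is recovered verbatim by pairing this statement with the enumeration from Theorem \ref{thm:classification}.
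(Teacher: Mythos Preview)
Your proposal is correct and follows essentially the same approach as the paper: both invoke the standard bijection between left-invariant almost complex structures on $M$ and almost complex structures on $\g$, observe that the rank of $N_J$ transfers under this bijection, and then appeal to Theorem~\ref{thm:classification}. Your write-up is simply more detailed in justifying the left-invariance of $N_J$ and the constancy of its rank.
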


\begin{proof}
Fix an almost complex structure $J$ on $M$. Our claim follows from the classification given in Theorem \ref{thm:classification} and the usual bijection between left-invariant almost complex structures on $M$ and almost complex structures on $\g$, after noting that the rank of $N_J$ on $M$ is equal to the rank of the almost complex structure induced by $J$ on $\g$.
\end{proof}

More in general, in the following result we establish a topological upper bound for the rank of left-invariant almost complex structures on certain solvmanifolds of arbitrary dimension.

\begin{theorem}\label{thm:solvmanifold}
Let $M = \Gamma \backslash G$ be a solvmanifold. Assume that $G$ is a completely solvable Lie group. Let $J$ be a left-invariant almost complex structure on $M$. Then
\[
\rk N_J \le \dim_\R M - b_1(M).
\]

\end{theorem}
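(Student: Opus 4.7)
The plan is to combine Lemma \ref{lemma:bound} with Hattori's theorem on the de Rham cohomology of completely solvable solvmanifolds. Since $J$ is left-invariant, the Nijenhuis tensor $N_J$ is left-invariant as well, hence its rank at any point of $M$ coincides with the rank of the induced almost complex structure $\tilde J$ on the Lie algebra $\g$ of $G$. Consequently we have the identification
\[
\rk N_J = \rk N_{\tilde J}.
\]

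Next I would apply Lemma \ref{lemma:bound} to $\tilde J$ on the $\dim_\R M$-dimensional Lie algebra $\g$, obtaining
\[
\rk N_J = \rk N_{\tilde J} \le \dim_\R \g - \dim_\R\bigl( \ker d \cap A^1_\R \bigr),
\]
where $d$ denotes the Chevalley--Eilenberg differential on $\g^*$. So the only remaining point is to identify $\dim_\R(\ker d \cap A^1_\R)$ with the topological invariant $b_1(M)$.

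This identification is precisely where the completely solvable hypothesis enters. Since $G$ is completely solvable and $\Gamma \subset G$ is a cocompact lattice, Hattori's theorem (extending Nomizu's theorem for nilmanifolds) yields an isomorphism $H^*_{dR}(M;\R) \cong H^*(\g;\R)$ between the de Rham cohomology of $M$ and the Chevalley--Eilenberg cohomology of $\g$. In degree one, $H^1(\g;\R) = \ker d \cap A^1_\R$, because on the Lie algebra level there are no $1$-coboundaries. Therefore
\[
b_1(M) = \dim_\R H^1_{dR}(M;\R) = \dim_\R H^1(\g;\R) = \dim_\R\bigl(\ker d \cap A^1_\R\bigr).
\]
Plugging this back gives the announced bound $\rk N_J \le \dim_\R M - b_1(M)$.

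There is no real obstacle beyond correctly invoking Hattori's theorem: the bulk of the work is already packaged in Lemma \ref{lemma:bound}, which is a purely Lie-algebraic upper bound, and in the standard cohomological comparison. The proof is therefore short, the only care needed being to stress that left-invariance of $J$ is what makes the rank a pointwise-constant quantity equal to the Lie-algebra rank, and that complete solvability is the hypothesis that promotes the Lie-algebraic dimension $\dim(\ker d \cap A^1_\R)$ to the topological invariant $b_1(M)$.
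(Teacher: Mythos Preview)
Your proof is correct and follows essentially the same route as the paper: pass to the induced structure $\tilde J$ on $\g$, apply Lemma \ref{lemma:bound} to bound $\rk N_{\tilde J}$ by $\dim_\R \g - \dim_\R(\ker d \cap A^1_\R)$, and then invoke Hattori's theorem to identify $\dim_\R(\ker d \cap A^1_\R) = b_1(\g)$ with $b_1(M)$.
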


\begin{proof}
Let $\g$ be the Lie algebra of $G$. Given any left-invariant almost complex structure $J$ on $M$, there is a corresponding almost complex structure $\Tilde{J}$ on $\g$, and $\rk N_J = \rk N_{\Tilde{J}}$. Let $b_1 (\g)$ be the real dimension of $H^1(\g) := \left( \ker d \cap A^1 \right) / \left( \Ima d \cap A^1 \right) = \left( \ker d \cap A^1 \right)$.  By Lemma \ref{lemma:bound},
\[
\rk N_J = \rk N_{\Tilde{J}} \le 2m - \dim _\R \left( \ker d \cap A^1_\R \right) = 2m - b_1 (\g).
\]
By Hattori's theorem \cite{Hat60}, there is an isomorphism
\[
H^\bullet (\g) \cong H^\bullet_{dR} (M;\R),
\]
thus $b_1(\g) = b_1 (M)$.
\end{proof}

We exploit our explicit examples to show that all the possible jumps in the local value of the rank of the Nijenhuis tensor, provided they satisfy the necessary condition of lower semi-continuity, occur on some $6$-nilmanifold.

\begin{proposition}\label{prop:jump}
For all $k_0 \le k_1 \in \{ 0,1,2,3 \}$, there exist a $6$-nilmanifold $M$, depending on $k_0,k_1$, and a left-invariant almost complex structure $J_0$ on $M$ such that 
\[
\rk N_{J_0} = k_0,
\]
and in any neighborhood of $J_0$ there are left-invariant almost complex structures of rank $k_1$.
\end{proposition}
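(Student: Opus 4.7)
The plan is to verify the statement case by case for each pair $(k_0, k_1)$ with $0 \le k_0 \le k_1 \le 3$, using the explicit catalogue of almost complex structures produced throughout Section \ref{sec:invariant} together with Theorem \ref{thm:lower:bound}.

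When $k_0 = k_1$, Theorem \ref{thm:nilmanifolds} guarantees the existence of a $6$-nilmanifold $M$ carrying a left-invariant almost complex structure $J_0$ of rank $k_0$, and the constant family $J(s) \equiv J_0$ trivially places structures of rank $k_1 = k_0$ in every neighborhood of $J_0$.

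When $k_0 < k_1$, I would first select, by cross-inspecting the three lists in Theorem \ref{thm:classification}, a $6$-dimensional nilpotent Lie algebra $\g$ admitting left-invariant almost complex structures of both ranks $k_0$ and $k_1$; such an algebra exists for every pair under consideration (several algebras in the rank-$3$ list actually carry structures of every intermediate rank, as witnessed by the constants in Table \ref{table}). On the corresponding nilmanifold $M = \Gamma \backslash G$, I would take $J_0$ to be the rank-$k_0$ structure given in Table \ref{table}, built from the reference almost complex structure $\tilde J$ with $(1,0)$-coframe $\phi^j = e^{2j-1} + i e^{2j}$, and similarly take $J_1$ of rank $k_1$ from the same table. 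Rewriting $J_1$ as a deformation of $J_0$ in the form \eqref{eq:deformed}---which can always be arranged by rescaling the defining constants, since the open condition $D \neq 0$ for the associated matrix $P$ is automatic for deformations sufficiently close to the identity---I would apply Theorem \ref{thm:lower:bound} to obtain the one-parameter family $J(s)$ with $J(0) = J_0$, $J(1) = J_1$, and $\rk N_{J(s)} \ge k_1$ at every $s$ where the curve is defined.

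To conclude, I need $\rk N_{J(s)} = k_1$ on an open dense set of small $s$, so that the resulting $J(s)$, which converges to $J_0$ as $s \to 0$, lies in every neighborhood of $J_0$. Since the coefficients of $\bar \mu_s$ in the basis $\{\omega^{\bar j \bar k}_s\}$ are rational in $s$ and $\bar s$, the rank of $\bar \mu_s$ is constant outside a proper real-analytic subset of the parameter plane. Its generic value is at least $k_1$ by Theorem \ref{thm:lower:bound}, and I would check---by a direct computation mirroring the one performed for the Nakamura manifold in the proof of Proposition \ref{prop:curves}---that $J_1$ can be chosen so that every $(k_1+1) \times (k_1+1)$ minor of the coefficient matrix vanishes along the curve, pinning the generic rank to exactly $k_1$. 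This verification that the generic rank along the curve does not overshoot $k_1$ is the main technical obstacle, but it reduces to a finite case analysis in the pairs $(k_0, k_1)$ and is made tractable by the explicit data in Table \ref{table} combined with the one-parameter computations already carried out in Section \ref{sec:curves}; in particular the case $k_0 = 0$ is handled directly by Proposition \ref{prop:curves}, and the case $k_1 = 3$ follows at once from Corollary \ref{cor:mni:nbhd}.
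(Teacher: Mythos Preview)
Your approach is the paper's: for each pair $(k_0,k_1)$, pick a Lie algebra from Table~\ref{table} carrying structures of both ranks, form the curve $J(s)$ from $J_0$ toward $J_1$, and verify the rank along it by the explicit computation of Proposition~\ref{prop:curves}. The paper compresses this to a single sentence; you have unpacked it and correctly isolated the two ingredients---the lower bound from Theorem~\ref{thm:lower:bound} and an upper-bound check that the generic rank does not overshoot $k_1$.

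One step in your write-up is shakier than necessary. You want to express the table's $J_1$ as a deformation of the table's $J_0$ in the form~\eqref{eq:deformed}, and you justify this by ``rescaling the defining constants''. But both $J_0$ and $J_1$ in the table are recorded as deformations of the \emph{same} reference structure $\tilde J$; rescaling $J_1$'s parameters moves it toward $\tilde J$, not toward $J_0$, and replacing $J_1$ by a rescaled version could in principle change its rank. The clean fix---implicit in the paper's one-liner---is to note that for every $k_0\in\{0,1,2,3\}$ the table contains a Lie algebra whose rank-$k_0$ entry has \emph{all-zero} parameters, i.e.\ $J_0=\tilde J$ itself: for instance $(0,0,0,12,14,24)$ for $k_0=0$, $(0,0,12,13,23,14)$ for $k_0=1$, and $(0,0,12,13,14+23,34-25)$ for $k_0=2$. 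On such an algebra the rank-$k_1$ structure from the table is already written in the form~\eqref{eq:deformed} relative to $J_0$, the curve $J(s)$ is immediate, and no rewriting is needed. With this choice your argument coincides with the paper's, and the remaining work---the direct computation pinning the generic rank to exactly $k_1$---is deferred identically by both.
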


\begin{proof}
The proof follows immediately applying the same idea of Proposition \ref{prop:curves} to the explicit almost complex structures given in \cref{table}.
\end{proof}

\section{Table of the possible ranks}\label{sec:table}
This section contains the table summarizing the possible ranks of almost complex structures on $6$-dimensional, nilpotent Lie algebras. The first column lists the $34$ possible isomorphism type of Lie algebras. The remaining columns list whether or not a structure of prescribed rank exists on each of them. When such a structure exists, the choice of parameters
\[
\begin{bmatrix}
e & f & g \\
p & q & r \\
s & t & u \\
\end{bmatrix}
\]
provided in the table, allows to obtain it starting from a fixed almost complex structure (see proof of Proposition \ref{existence:mni}). By the word \emph{generic}, we mean that a generic almost complex structure will have the corresponding rank. The non-existence of structures is proved in Section \ref{sec:invariant}.

\begin{remark}
The computations for the rank of the almost complex structure presented in \cref{table} have been checked using \emph{Wolfram Mathematica, Version 13.1}.
\end{remark}

\clearpage

{
\setlength{\LTleft}{-20cm plus -1fill}
\setlength{\LTright}{\LTleft}

\begin{longtable}{|C|C|C|C|C|}
\caption{Rank of almost complex structures}\label{table} \\
\hline & & & & \\[-10pt]

\text{Lie Algebra} & \text{rank 3} & \text{rank 2} & \text{rank 1} & \text{rank 0} \\

\hline

(0,0,12,13,14+23,34-25) & 
\begin{array}{c}
\text{generic} \\
{
\tiny
\begin{bmatrix}
0 & 0 & 1 \\
0 & 0 & 0 \\
0 & 0 & 0 \\
\end{bmatrix}
}
\end{array}
& 
\begin{array}{c}
\text{yes}\\
{
\tiny
\begin{bmatrix}
0 & 0 & 0 \\
0 & 0 & 0 \\
0 & 0 & 0 \\
\end{bmatrix}
}
\end{array} 
& \text{no} & \text{no} \\[20pt]

\hline

(0, 0, 12, 13, 14, 34 - 25) & 
\begin{array}{c}
\text{generic} \\
{
\tiny
\begin{bmatrix}
0 & 0 & 1 \\
0 & 0 & 0 \\
0 & 2 & 0 \\
\end{bmatrix}
}
\end{array}
& 
\begin{array}{c}
\text{yes}\\
{
\tiny
\begin{bmatrix}
0 & 0 & 0 \\
0 & 0 & 0 \\
0 & 0 & 0 \\
\end{bmatrix}
}
\end{array} 
& 
\begin{array}{c}
\text{yes}\\
{
\tiny
\begin{bmatrix}
0 & -\frac{1}{2} & -\frac{1}{4}+ \frac{i}{2} \\
0 & 0 & -\frac{1}{2} \\
0 & 0 & 0 \\
\end{bmatrix}
}
\end{array}
& 
\text{no} \\[20pt]
\hline
(0, 0, 12, 13, 14, 15) & \text{no} 
& 
\begin{array}{c}
\text{generic}\\
{
\tiny
\begin{bmatrix}
0 & 0 & 0 \\
0 & 0 & 0 \\
0 & 0 & 0 \\
\end{bmatrix}
}
\end{array} 
& 
\begin{array}{c}
\text{yes}\\
{
\tiny
\begin{bmatrix}
0 & 0 & 0 \\
0 & 1 & 1 \\
0 & -4 & 1 \\
\end{bmatrix}
}
\end{array} 
& 
\text{no} \\[20pt]
\hline
(0, 0, 12, 13, 14 + 23, 24 + 15) & 
\begin{array}{c}
\text{generic}\\
{
\tiny
\begin{bmatrix}
0 & 0 & 1 \\
0 & 0 & 0 \\
0 & 0 & 0 \\
\end{bmatrix}
}
\end{array} 
& 
\begin{array}{c}
\text{yes}\\
{
\tiny
\begin{bmatrix}
0 & 0 & 0 \\
0 & 0 & 0 \\
0 & 0 & 0 \\
\end{bmatrix}
}
\end{array} 
& 
\begin{array}{c}
\text{yes}\\
{
\tiny
\begin{bmatrix}
0 & 0 & 0 \\
0 & 1 & 1 \\
0 & -2 + 2 \sqrt{2} & 1 \\
\end{bmatrix}
}
\end{array} 
& 
\text{no} \\[20pt]
\hline
(0, 0, 12, 13, 14, 23 + 15) 
&
\begin{array}{c}
\text{generic}\\
{
\tiny
\begin{bmatrix}
0 & 0 & 1 \\
0 & 0 & 0 \\
0 & 0 & 0 \\
\end{bmatrix}
}
\end{array}  
& 
\begin{array}{c}
\text{yes}\\
{
\tiny
\begin{bmatrix}
0 & 0 & 0 \\
0 & 0 & 0 \\
0 & 0 & 0 \\
\end{bmatrix}
}
\end{array} 
& 
\begin{array}{c}
\text{yes}\\
{
\tiny
\begin{bmatrix}
0 & 0 & 0 \\
0 & 0 & 2 \\
0 & 2 & 1 \\
\end{bmatrix}
}
\end{array} 
& 
\text{no} \\[20pt]
\hline
(0, 0, 12, 13, 23, 14) 
& 
\begin{array}{c}
\text{generic}\\
{
\tiny
\begin{bmatrix}
0 & 0 & 1 \\
0 & 0 & 0 \\
0 & 0 & 0 \\
\end{bmatrix}
}
\end{array} 
& 
\begin{array}{c}
\text{yes}\\
{
\tiny
\begin{bmatrix}
0 & 0 & 0 \\
0 & 0 & 1 \\
0 & 0 & 0 \\
\end{bmatrix}
}
\end{array} 
& 
\begin{array}{c}
\text{yes}\\
{
\tiny
\begin{bmatrix}
0 & 0 & 0 \\
0 & 0 & 0 \\
0 & 0 & 0 \\
\end{bmatrix}
}
\end{array} 
& \text{no} \\[20pt]
\hline
(0, 0, 12, 13, 23, 14 - 25) 
& 
\begin{array}{c}
\text{generic}\\
{
\tiny
\begin{bmatrix}
0 & 0 & 1 \\
0 & 0 & 0 \\
0 & 0 & 0 \\
\end{bmatrix}
}
\end{array} 
& 
\begin{array}{c}
\text{yes}\\
{
\tiny
\begin{bmatrix}
0 & 0 & 0 \\
0 & 0 & 0 \\
0 & 0 & 0 \\
\end{bmatrix}
}
\end{array} 
& 
\begin{array}{c}
\text{yes}\\
{
\tiny
\begin{bmatrix}
0 & 0 & 0 \\
0 & 0 & 2 \\
0 & 2 & 1 \\
\end{bmatrix}
}
\end{array}& 
\text{no} \\[20pt]
\hline
(0, 0, 12, 13, 23, 14 + 25) 
& 
\begin{array}{c}
\text{generic}\\
{
\tiny
\begin{bmatrix}
0 & 0 & 1 \\
0 & 0 & 0 \\
0 & 0 & 0 \\
\end{bmatrix}
}
\end{array}& 
\begin{array}{c}
\text{yes}\\
{
\tiny
\begin{bmatrix}
0 & 0 & 0 \\
0 & 0 & 0 \\
0 & 0 & 0 \\
\end{bmatrix}
}
\end{array} 
& 
\begin{array}{c}
\text{yes}\\
{
\tiny
\begin{bmatrix}
0 & 0 & 0 \\
0 & 1 & -1 \\
0 & 2 & 1 \\
\end{bmatrix}
}
\end{array} 
& 
\begin{array}{c}
\text{yes}\\
{
\tiny
\begin{bmatrix}
0 & 0 & 0 \\
0 & 1 & -1 \\
0 & 2 & i \\
\end{bmatrix}
}
\end{array} 
\\[20pt]

\hline
(0, 0, 0, 12, 14 - 23, 15 + 34) 
& 
\begin{array}{c}
\text{generic}\\
{
\tiny
\begin{bmatrix}
0 & 0 & 1 \\
0 & 0 & 0 \\
0 & 1 & 0 \\
\end{bmatrix}
}
\end{array} 
& 
\begin{array}{c}
\text{yes}\\
{
\tiny
\begin{bmatrix}
0 & 0 & 0 \\
0 & 0 & 1 \\
0 & 0 & 0 \\
\end{bmatrix}
}
\end{array} 
& 
\begin{array}{c}
\text{yes}\\
{
\tiny
\begin{bmatrix}
0 & 0 & 0 \\
0 & 0 & 0 \\
0 & 0 & 0 \\
\end{bmatrix}
}
\end{array} 
& 
\text{no} \\[20pt]

\hline
(0, 0, 0, 12, 14, 15 + 23) 
& 
\begin{array}{c}
\text{generic}\\
{
\tiny
\begin{bmatrix}
0 & 0 & 1 \\
0 & 0 & 0 \\
0 & 0 & 0 \\
\end{bmatrix}
}
\end{array} 
& 
\begin{array}{c}
\text{yes}\\
{
\tiny
\begin{bmatrix}
0 & 0 & 0 \\
0 & 0 & 1 \\
0 & 0 & 0 \\
\end{bmatrix}
}
\end{array} 
& 
\begin{array}{c}
\text{yes}\\
{
\tiny
\begin{bmatrix}
0 & 0 & 0 \\
0 & 0 & 0 \\
0 & 0 & 0 \\
\end{bmatrix}
}
\end{array} 

& \text{no} \\[20pt]

\hline
(0, 0, 0, 12, 14, 15 + 23 + 24) 
& 
\begin{array}{c}
\text{generic}\\
{
\tiny
\begin{bmatrix}
0 & 0 & 1 \\
0 & 0 & 0 \\
0 & 0 & 0 \\
\end{bmatrix}
}
\end{array} 
& 
\begin{array}{c}
\text{yes}\\
{
\tiny
\begin{bmatrix}
0 & 0 & 0 \\
0 & 0 & 1 \\
0 & 0 & 0 \\
\end{bmatrix}
}
\end{array} 
& 
\begin{array}{c}
\text{yes}\\
{
\tiny
\begin{bmatrix}
0 & 0 & 0 \\
0 & 0 & 0 \\
0 & 0 & 0 \\
\end{bmatrix}
}
\end{array} 
& \text{no} \\[20pt]

\hline
(0, 0, 0, 12, 14, 15 + 24) 
& 
\begin{array}{c}
\text{generic}\\
{
\tiny
\begin{bmatrix}
0 & 0 & 1 \\
0 & 0 & 0 \\
0 & 0 & 0 \\
\end{bmatrix}
}
\end{array} 
& 
\begin{array}{c}
\text{yes}\\
{
\tiny
\begin{bmatrix}
0 & 0 & 0 \\
0 & 0 & 1 \\
0 & 0 & 0 \\
\end{bmatrix}
}
\end{array} 
& 
\begin{array}{c}
\text{yes}\\
{
\tiny
\begin{bmatrix}
0 & 0 & 0 \\
0 & 0 & 0 \\
0 & 0 & 0 \\
\end{bmatrix}
}
\end{array} 
& 
\text{no} \\[20pt]

\hline
(0, 0, 0, 12, 14, 15) 
& 
\text{no} 
& 
\begin{array}{c}
\text{generic}\\
{
\tiny
\begin{bmatrix}
0 & 0 & 0 \\
0 & 0 & 1 \\
0 & 0 & 0 \\
\end{bmatrix}
}
\end{array} 
& 
\begin{array}{c}
\text{yes}\\
{
\tiny
\begin{bmatrix}
0 & 0 & 0 \\
0 & 0 & 0 \\
0 & 0 & 0 \\
\end{bmatrix}
}
\end{array} 
& 
\text{no} \\[20pt]

\hline
(0, 0, 0, 12, 13, 14 + 35) 
& 
\begin{array}{c}
\text{generic}\\
{
\tiny
\begin{bmatrix}
0 & 0 & 1 \\
0 & 0 & 0 \\
0 & 0 & 0 \\
\end{bmatrix}
}
\end{array} 
& 
\begin{array}{c}
\text{yes}\\
{
\tiny
\begin{bmatrix}
0 & 0 & 0 \\
0 & 0 & 1 \\
0 & 0 & 0 \\
\end{bmatrix}
}
\end{array} 
& 
\begin{array}{c}
\text{yes}\\
{
\tiny
\begin{bmatrix}
0 & 0 & 0 \\
0 & 0 & 0 \\
0 & 0 & 0 \\
\end{bmatrix}
}
\end{array} 
& 
\text{no} \\[20pt]

\hline
(0, 0, 0, 12, 23, 14 + 35) 
& 
\begin{array}{c}
\text{generic}\\
{
\tiny
\begin{bmatrix}
0 & 0 & 1 \\
0 & 0 & 0 \\
0 & 0 & 0 \\
\end{bmatrix}
}
\end{array} 
& 
\begin{array}{c}
\text{yes}\\
{
\tiny
\begin{bmatrix}
0 & 0 & 0 \\
0 & 0 & 1 \\
0 & 0 & 0 \\
\end{bmatrix}
}
\end{array} 
& 
\begin{array}{c}
\text{yes}\\
{
\tiny
\begin{bmatrix}
0 & 0 & 0 \\
0 & 0 & 0 \\
0 & 0 & 0 \\
\end{bmatrix}
}
\end{array} 
& 
\text{no} \\[20pt]

\hline
(0, 0, 0, 12, 23, 14 - 35) 
& 
\begin{array}{c}
\text{generic}\\
{
\tiny
\begin{bmatrix}
0 & 0 & 1 \\
0 & 0 & 0 \\
0 & 0 & 0 \\
\end{bmatrix}
}
\end{array} 
& 
\begin{array}{c}
\text{yes}\\
{
\tiny
\begin{bmatrix}
0 & 0 & 0 \\
0 & 0 & 1 \\
0 & 0 & 0 \\
\end{bmatrix}
}
\end{array} 
& 
\begin{array}{c}
\text{yes}\\
{
\tiny
\begin{bmatrix}
0 & 0 & 0 \\
0 & 0 & 0 \\
0 & 0 & 0 \\
\end{bmatrix}
}
\end{array} 
& 
\begin{array}{c}
  \text{yes: see \cref{sec:rank0}}
\end{array} \\[20pt]

\hline
(0, 0, 0, 12, 14, 24) 
& 
\begin{array}{c}
\text{generic}\\
{
\tiny
\begin{bmatrix}
0 & 0 & 1 \\
0 & 0 & 0 \\
0 & 0 & 0 \\
\end{bmatrix}
}
\end{array} 
& 
\begin{array}{c}
\text{yes}\\
{
\tiny
\begin{bmatrix}
0 & 1 & 1 \\
0 & 2 & 1 \\
0 & 0 & 2 \\
\end{bmatrix}
}
\end{array} 
& 
\begin{array}{c}
\text{yes}\\
{
\tiny
\begin{bmatrix}
0 & 0 & 0 \\
0 & 0 & 0 \\
0 & 0 & 2 \\
\end{bmatrix}
}
\end{array} 
& 
\begin{array}{c}
\text{yes}\\
{
\tiny
\begin{bmatrix}
0 & 0 & 0 \\
0 & 0 & 0 \\
0 & 0 & 0 \\
\end{bmatrix}
}
\end{array} 
 \\[20pt]

\hline
(0, 0, 0, 12, 13 - 24, 14 + 23) 
& 
\begin{array}{c}
\text{generic}\\
{
\tiny
\begin{bmatrix}
0 & 0 & 1 \\
0 & 2 & 0 \\
0 & 0 & 0 \\
\end{bmatrix}
}
\end{array} 
& 
\begin{array}{c}
\text{yes}\\
{
\tiny
\begin{bmatrix}
0 & 0 & 1 \\
0 & 0 & 0 \\
0 & 0 & 0 \\
\end{bmatrix}
}
\end{array} 
& 
\begin{array}{c}
\text{yes}\\
{
\tiny
\begin{bmatrix}
0 & 0 & 0 \\
0 & 0 & 0 \\
0 & 0 & \frac{1}{2} \\
\end{bmatrix}
}
\end{array} 
& 
\begin{array}{c}
\text{yes}\\
{
\tiny
\begin{bmatrix}
0 & 0 & 0 \\
0 & 0 & 0 \\
0 & 0 & 0 \\
\end{bmatrix}
}
\end{array} 
 \\[20pt]

\hline
(0, 0, 0, 12, 14, 13 - 24) 
& 
\begin{array}{c}
\text{generic}\\
{
\tiny
\begin{bmatrix}
0 & 0 & 1 \\
0 & 0 & 0 \\
0 & 0 & 0 \\
\end{bmatrix}
}
\end{array} 
& 
\begin{array}{c}
\text{yes}\\
{
\tiny
\begin{bmatrix}
0 & 0 & 0 \\
0 & 0 & 1 \\
0 & 0 & 0 \\
\end{bmatrix}
}
\end{array} 
& 
\begin{array}{c}
\text{yes}\\
{
\tiny
\begin{bmatrix}
0 & 0 & 0 \\
0 & 0 & 0 \\
0 & 0 & 0 \\
\end{bmatrix}
}
\end{array} 
& 
\begin{array}{c}
\text{yes}\\
{
\tiny
\begin{bmatrix}
0 & 0 & 0 \\
0 & 0 & 0 \\
0 & 0 & 3 \\
\end{bmatrix}
}
\end{array} 
 \\[20pt]

\hline
(0, 0, 0, 12, 13 + 14, 24) 
& 
\begin{array}{c}
\text{generic}\\
{
\tiny
\begin{bmatrix}
0 & 0 & 1 \\
0 & 0 & 0 \\
0 & 0 & 0 \\
\end{bmatrix}
}
\end{array} 
& 
\begin{array}{c}
\text{yes}\\
{
\tiny
\begin{bmatrix}
0 & 0 & 0 \\
0 & 0 & 1 \\
0 & 0 & 0 \\
\end{bmatrix}
}
\end{array} 
& 
\begin{array}{c}
\text{yes}\\
{
\tiny
\begin{bmatrix}
0 & 0 & 0 \\
0 & 0 & 0 \\
0 & 0 & 0 \\
\end{bmatrix}
}
\end{array} 
& 
\begin{array}{c}
\text{yes}\\
{
\tiny
\begin{bmatrix}
0 & 0 & 0 \\
0 & 0 & 0 \\
0 & 0 & \frac{-1+2i}{5} \\
\end{bmatrix}
}
\end{array} 
 \\[20pt]

\hline
(0, 0, 0, 12, 13, 14 + 23) 
& 
\begin{array}{c}
\text{generic}\\
{
\tiny
\begin{bmatrix}
0 & 0 & 1 \\
0 & 0 & 0 \\
0 & 0 & 0 \\
\end{bmatrix}
}
\end{array} 
& 
\begin{array}{c}
\text{yes}\\
{
\tiny
\begin{bmatrix}
0 & 0 & 0 \\
0 & 0 & 1 \\
0 & 0 & 0 \\
\end{bmatrix}
}
\end{array} 
& 
\begin{array}{c}
\text{yes}\\
{
\tiny
\begin{bmatrix}
0 & 0 & 0 \\
0 & 0 & 0 \\
0 & 0 & 0 \\
\end{bmatrix}
}
\end{array} 
& 
\begin{array}{c}
\text{yes}\\
{
\tiny
\begin{bmatrix}
0 & 0 & 0 \\
0 & 0 & 0 \\
0 & 0 & -\frac{1}{3} \\
\end{bmatrix}
}
\end{array} 
 \\[20pt]

\hline
(0, 0, 0, 12, 13, 24) 
& 
\begin{array}{c}
\text{generic}\\
{
\tiny
\begin{bmatrix}
0 & 0 & 1 \\
0 & 0 & 0 \\
0 & 0 & 0 \\
\end{bmatrix}
}
\end{array} 
& 
\begin{array}{c}
\text{yes}\\
{
\tiny
\begin{bmatrix}
0 & 0 & 0 \\
0 & 0 & 1 \\
0 & 0 & 0 \\
\end{bmatrix}
}
\end{array} 
& 
\begin{array}{c}
\text{yes}\\
{
\tiny
\begin{bmatrix}
0 & 0 & 0 \\
0 & 0 & 0 \\
0 & 0 & 0 \\
\end{bmatrix}
}
\end{array} 
& 
\begin{array}{c}
\text{yes}\\
{
\tiny
\begin{bmatrix}
0 & 0 & 0 \\
0 & 2i & 0 \\
0 & 0 & -3i \\
\end{bmatrix}
}
\end{array} 
 \\[20pt]

\hline
(0, 0, 0, 12, 13, 14) 
& 
\text{no} 
& 
\begin{array}{c}
\text{generic}\\
{
\tiny
\begin{bmatrix}
0 & 0 & 0 \\
0 & 0 & 1 \\
0 & 0 & 0 \\
\end{bmatrix}
}
\end{array} 
& 
\begin{array}{c}
\text{yes}\\
{
\tiny
\begin{bmatrix}
0 & 0 & 0 \\
0 & 0 & 0 \\
0 & 0 & 2 \\
\end{bmatrix}
}
\end{array} 
& 
\begin{array}{c}
\text{yes}\\
{
\tiny
\begin{bmatrix}
0 & 0 & 0 \\
0 & 0 & 0 \\
0 & 0 & 0 \\
\end{bmatrix}
}
\end{array} 
 \\[20pt]

\hline
(0, 0, 0, 12, 13, 23) 
& 
\begin{array}{c}
\text{generic}\\
{
\tiny
\begin{bmatrix}
0 & 0 & 1 \\
0 & 0 & 0 \\
0 & 0 & 0 \\
\end{bmatrix}
}
\end{array} 
& 
\text{no}
& 
\begin{array}{c}
\text{yes}\\
{
\tiny
\begin{bmatrix}
0 & 0 & 0 \\
0 & 0 & 0 \\
0 & 0 & 2 \\
\end{bmatrix}
}
\end{array} 
& 
\begin{array}{c}
\text{yes}\\
{
\tiny
\begin{bmatrix}
0 & 0 & 0 \\
0 & 0 & 0 \\
0 & 0 & 0 \\
\end{bmatrix}
}
\end{array} 
\\[20pt]

\hline

(0, 0, 0, 0, 12, 15 + 34) 
& 
\text{no} 
& 
\begin{array}{c}
\text{generic}\\
{
\tiny
\begin{bmatrix}
0 & 0 & 1 \\
0 & 0 & 0 \\
0 & 1 & 0 \\
\end{bmatrix}
}
\end{array} 
 & 
\begin{array}{c}
\text{yes}\\
{
\tiny
\begin{bmatrix}
0 & 0 & 0 \\
0 & 0 & 0 \\
0 & 0 & 0 \\
\end{bmatrix}
}
\end{array} 
 & 
\text{no} \\[20pt]

\hline

(0, 0, 0, 0, 12, 15) 
& 
\text{no} 
& 
\begin{array}{c}
\text{generic}\\
{
\tiny
\begin{bmatrix}
0 & 0 & 1 \\
0 & 0 & 0 \\
0 & 1 & 0 \\
\end{bmatrix}
}
\end{array} 
 & 
\begin{array}{c}
\text{yes}\\
{
\tiny
\begin{bmatrix}
0 & 0 & 0 \\
0 & 0 & 0 \\
0 & 0 & 0 \\
\end{bmatrix}
}
\end{array} 
 & 
\text{no} \\[20pt]

\hline

(0, 0, 0, 0, 12, 14 + 25) 
& 
\text{no} 
& 
\begin{array}{c}
\text{generic}\\
{
\tiny
\begin{bmatrix}
0 & 0 & 1 \\
0 & 0 & 0 \\
0 & 0 & 0 \\
\end{bmatrix}
}
\end{array} 
 & 
\begin{array}{c}
\text{yes}\\
{
\tiny
\begin{bmatrix}
0 & 0 & 0 \\
0 & 0 & 0 \\
0 & 0 & 0 \\
\end{bmatrix}
}
\end{array} 
 & 
\begin{array}{c}
  \text{yes: see \cref{sec:rank0}}
\end{array}
\\[20pt]

\hline

(0, 0, 0, 0, 13 - 24, 14 + 23) 
& 
\text{no} 
& 
\begin{array}{c}
\text{generic}\\
{
\tiny
\begin{bmatrix}
0 & 0 & 1 \\
0 & 2 & 0 \\
0 & 0 & 0 \\
\end{bmatrix}
}
\end{array} 
 & 
\begin{array}{c}
\text{yes}\\
{
\tiny
\begin{bmatrix}
0 & 0 & 0 \\
0 & 0 & 1 \\
0 & 0 & 0 \\
\end{bmatrix}
}
\end{array} 
&
\begin{array}{c}
\text{yes}\\
{
\tiny
\begin{bmatrix}
0 & 0 & 0 \\
0 & 0 & 0 \\
0 & 0 & 0 \\
\end{bmatrix}
}
\end{array} 
  \\[20pt]

\hline

(0, 0, 0, 0, 12, 14 + 23) 
& 
\text{no} 
& 
\begin{array}{c}
\text{generic}\\
{
\tiny
\begin{bmatrix}
0 & 0 & 1 \\
0 & 0 & 0 \\
0 & 0 & 0 \\
\end{bmatrix}
}
\end{array} 
 & 
\begin{array}{c}
\text{yes}\\
{
\tiny
\begin{bmatrix}
0 & 0 & 0 \\
0 & 0 & 0 \\
0 & 0 & 0 \\
\end{bmatrix}
}
\end{array} 
 &
\begin{array}{c}
\text{yes}\\
{
\tiny
\begin{bmatrix}
0 & -i & 0 \\
0 & 0 & 0 \\
0 & 0 & 0 \\
\end{bmatrix}
}
\end{array} 
  \\[20pt]

\hline

(0, 0, 0, 0, 12, 34) 
&
\text{no} 
& 
\begin{array}{c}
\text{generic}\\
{
\tiny
\begin{bmatrix}
0 & 0 & 1 \\
0 & 0 & 1 \\
0 & 0 & 0 \\
\end{bmatrix}
}
\end{array} 
 & 
\begin{array}{c}
\text{yes}\\
{
\tiny
\begin{bmatrix}
0 & 0 & 0 \\
0 & 0 & 1 \\
0 & 0 & 0 \\
\end{bmatrix}
}
\end{array} 
 & 
\begin{array}{c}
\text{yes}\\
{
\tiny
\begin{bmatrix}
0 & 0 & 0 \\
0 & 0 & 0 \\
0 & 0 & 0 \\
\end{bmatrix}
}
\end{array} 
  \\[20pt]

\hline

(0, 0, 0, 0, 12, 13) 
& 
\text{no} 
& 
\begin{array}{c}
\text{generic}\\
{
\tiny
\begin{bmatrix}
0 & 0 & 1 \\
0 & 0 & 0 \\
0 & 0 & 0 \\
\end{bmatrix}
}
\end{array} 
 &
\begin{array}{c}
\text{yes}\\
{
\tiny
\begin{bmatrix}
0 & 0 & 0 \\
0 & 0 & 0 \\
0 & 0 & 0 \\
\end{bmatrix}
}
\end{array} 
 &
\begin{array}{c}
\text{yes}\\
{
\tiny
\begin{bmatrix}
0 & -1/6 & 0 \\
0 & 2 & 0 \\
0 & 0 & 2 \\
\end{bmatrix}
}
\end{array} 
  \\[20pt]
\hline
(0, 0, 0, 0, 0, 12 + 34) 
& 
\text{no} 
& 
\text{no} 
& 
\begin{array}{c}
\text{generic}\\
{
\tiny
\begin{bmatrix}
0 & 0 & 0 \\
0 & 0 & 1 \\
0 & 0 & 0 \\
\end{bmatrix}
}
\end{array} 
 & 
\begin{array}{c}
\text{yes}\\
{
\tiny
\begin{bmatrix}
0 & 0 & 0 \\
0 & 0 & 0 \\
0 & 0 & 0 \\
\end{bmatrix}
}
\end{array} 
  \\[20pt]
  
\hline

(0, 0, 0, 0, 0, 12) 
& 
\text{no} 
& 
\text{no} 
& 
\begin{array}{c}
\text{generic}\\
{
\tiny
\begin{bmatrix}
0 & 0 & 1 \\
0 & 0 & 0 \\
0 & 0 & 0 \\
\end{bmatrix}
}
\end{array} 
 & 
\begin{array}{c}
\text{yes}\\
{
\tiny
\begin{bmatrix}
0 & 0 & 0 \\
0 & 0 & 0 \\
0 & 0 & 0 \\
\end{bmatrix}
}
\end{array} 
  \\[20pt]

\hline

(0, 0, 0, 0, 0, 0) 
& 
\text{no} 
& 
\text{no} 
& 
\text{no} 
& 
\text{generic} \\
\hline
\end{longtable}
}

\medskip

{\small
\printbibliography
}

\end{document}